\documentclass[12pt]{amsart}
\textwidth=13.5cm
  \textheight=23cm
  \hoffset=-1cm
  \voffset=-1cm
\usepackage{enumerate}
\usepackage{graphicx,graphics}
\usepackage{amsfonts}
\usepackage{amssymb}
\usepackage{amsthm}
\usepackage{amsmath}
\usepackage{mathrsfs}
\usepackage{hyperref}
\input{xy}
\xyoption{all}

\newtheorem{theorem}{Theorem}[section]
\newtheorem{lemma}[theorem]{Lemma}

\newtheorem{remark}[theorem]{Remark}
\newtheorem{corollary}[theorem]{Corollary}

\newtheorem{proposition}[theorem]{Proposition}

\def\K{\mathcal{K}}
\def\R{\mathbb{R}}
\def\B{\mathbb{B}}
\def\s{\mathbb{S}}

\DeclareMathOperator{\Aff}{Aff} \DeclareMathOperator{\aff}{aff}

\DeclareMathOperator{\conv}{conv} \DeclareMathOperator{\Sim}{Sim}
 \DeclareMathOperator{\Dil}{Dil}
 
\numberwithin{equation}{section}

\begin{document}

\title{A pseudometric invariant under similarities in the hyperspace of non-degenerated compact convex sets of $\mathbb R^n$}

\author{Bernardo Gonz\'alez Merino and Natalia Jonard-P\'erez }

\email{(B.\,Gonz\'alez Merino) bg.merino@tum.de}

\email{(N.\,Jonard-P\'erez) nat@ciencias.unam.mx}

\subjclass[2010]{52A20, 52A21, 57S10, 54B20, 54C55}

\keywords{Convex body, Banach-Mazur distance, Hausdorff metric, group of similarities, inner and outer radii, geometric inequalities}

\thanks{The first author was partially supported by MINECO/FEDER project reference MTM2012-34037, Spain.
The second author has been supported by `Programa de becas
posdoctorales de la UNAM' and by  CONACYT (Mexico) under grant
204028.}

\begin{abstract}
In this work we define a new pseudometric in $\mathcal K^n_*$, the
hyperspace of all non-degenerated compact convex sets of $\mathbb
R^n$, which is invariant under similarities.    We will prove that
the quotient space generated by this pseudometric (which is the
orbit space generated by the natural action of the group of
similarities on $\mathcal K^n_*$) is homeomorphic to the
Banach-Mazur compactum $BM(n)$, while $\mathcal K^n_*$ is
homeomorphic to the topological product $Q\times\mathbb R^{n+1}$,
where $Q$ stands for the Hilbert cube.  Finally we will show some
consequences in convex geometry, namely, we measure how much two
convex bodies differ (by means of our new pseudometric) in terms of
some classical functionals.
\end{abstract}
\maketitle\markboth{B. Gonz\'alez Merino and N. Jonard-P\'erez} {A
pseudometric invariant under similarities for compact convex sets}

\section{Introduction}

The most common way to measure the distance between two non-empty closed subsets of a metric space is by means of the well known Hausdorff distance. Namely, if $A$ and $B$ are closed subsets of the metric space $(X,d)$, the Hausdorff distance between $A$ and $B$ is defined by the rule
$$d_H(A,B)=\max\left\{\sup_{a\in A}\{d(a,B)\}, ~\sup_{b\in B}\{d(b,A)\}\right\},$$
where $d(a,B)=\inf\{d(a,b)\mid b\in B\}$. However, this distance does not tell us much information about how much $A$ and $B$ are geometrically alike.
In that sense, there are some other ways to measure the distance between (classes of) closed sets.
For example, the Gromov-Hausdorff distance $d_{GH}$ between two compact metric spaces $X$ and $Y$ % (in particular, between to compact subsets of a given metric space)
 is defined as the infimum of all Hausdorff distances $d_H(j(X), i(Y))$, where $j:X\to Z$ and $i:Y\to Z$ are isometric embeddings into a common metric space $Z$,  $d_H$ is the Hausdorff distance determined by the metric in $Z$ and the infimum is taken over all possible $Z$, $j$, and $i$.  (see e.g. \cite{Burago}). If $A$ and $B$ are isometric compact spaces, the Gromov-Hausdorff distance between them is always zero (and reciprocally). In general, $d_{GH}$   measures how far two metric spaces are from being isometric, but this is not very helpful when we want to measure the difference between two compact spaces with respect to other geometric qualities rather than the isometries.

Take into account the following more particular situation. Suppose that we have a continuous action of a topological group  $G$ on a metric space $(X,d)$. This action induces a continuous action on the hyperspace $(\mathcal C(X), d_H)$ of all non-empty compact subsets of $X$ via the formula:
$$(g, A)\longmapsto gA=\{ga\mid a\in A\},\quad A\in \mathcal C(X),~g\in G.$$
In this case we are interested in finding a useful pseudometric $\rho$ in $\mathcal C(X)$ such that
\begin{equation}\label{f:1}
 \rho(A,B)=0\quad \text{ if and only if }\quad A=gB\quad \text{ for some }\quad g\in G.
\end{equation}
\begin{equation}\label{f:2}
\rho(gA,hB)=\rho(A,B)\quad \text{ for every}\quad g, h\in G.
\end{equation}

One way to achieve this consists in finding a metric $\delta$ in the
orbit space $\mathcal C(X)/G=\{G(A)\mid A\in \mathcal C(X)\}$, where
$G(A)=\{gA\mid g\in G\}$ denotes the $G$-orbit of $A$. In this case,
the function $\rho$ defined by the rule $\rho(A,B)=\delta(G(A),
G(B))$ will satisfy the desired conditions.

%If the group acts on $X$ by means of isometries (namely, if the
%metric $d$ is $G$-invariant, i.e., $d(gx, gy)=d(x,y)$ for every
%$x,y\in X$ and $g\in G$), the Hausdorff distance determined by $d$
%in $\mathcal C(X)$ will be $G$-invariant too. In this case, there is
%a natural and obvious way to define a metric in $\mathcal C(X)/G$
%which in fact is compatible with the quotient topology (see
%formula~\ref{metrica en cociente}). Additionally, if the group is
%compact, the formula
%$$\hat d(x,y)=\sup\{d(gx,gy)\mid g\in G\}$$
%defines an equivalent $G$-invariant metric in $X$. The topology
%generate by the Hausdorff distance determined by this new metric is
%the same that the original one and thus, in this case, we can define
%a metric in the orbit space $\mathcal C (X)/G$ (and therefore, a
%pseudometirc in $\mathcal C(X)$ just as we explained before.

Another well-known example which is closer to our interest is
the Banach-Mazur  distance between convex sets. Consider the set
$\mathcal K^n_0$ of all convex bodies of $\mathbb R^n$ (i.e., all
compact and convex subsets of $\mathbb R^n$ with non-empty interior)
equipped with the natural action of the group $\Aff(n)$ of all
invertible affine transformations of $\mathbb R^n$.  The extended
Banach-Mazur distance (or Minkowski distance)  in $\mathcal K^n_0$
is defined as
$$d_{BM}(A,B)=\inf\{\alpha\geq 1\mid A\subset gB\subset \alpha A+x,~~g\in \Aff(n), ~x\in\mathbb R^n\}.$$
It is well known that the Banach-Mazur distance satisfies the following properties
\begin{enumerate}[i)]
\item $d_{BM}(A,B)\geq 1$ and $d_{BM}(A,B)=1$ iff $A=gB$ for some $g\in \Aff(n)$,
\item $d_{BM}(A,B)=d_{BM}(B,A)$,
\item $d_{BM}(A,B)\leq d_{BM}(A,C)\cdot d_{BM}(C,B)$.
\end{enumerate}
Therefore,  by taking the logarithm of $d_{BM}$ we can define a
pseudometric in $\mathcal K^n_0$ which measures how far two convex
bodies are from belonging to the same $\Aff(n)$-orbit. If we equip
the orbit space $\mathcal K^n_{0}/\Aff(n)$ with the metric induced
by $\ln(d_{BM})$ (which in fact determines the quotient topology generated by the Hausdorff distance topology of $\mathcal{K}^n_0$),
we obtain a compact metric space known as the Banach-Mazur compactum
$BM(n)$. Let us recall that originally, the Banach-Mazur
compactum $ BM(n)$ was defined as the set of isometry classes
of $n$-dimensional Banach spaces topologized  by the original
Banach-Mazur distance, which is defined as follows.
 $$d(E,F)=\ln \big(\inf \big\{\|T\|\cdot\|T^{-1}\|  ~ \big\vert ~  T:E\to F\text{ is a linear isomorphism}\big\}\big).$$

However there are certain situations in convex geometry, where
distance $d_{BM}$ is not good enough. While working on \cite{BrGo},
we computed pairs of sets $K_m, K_M$ satisfying $K_m\subset K\subset
K_M$ sharing inradius and circumradius, diameter and minimal width, for
many sets $K\in\K^2$, and such that $K_m$ (resp. $K_M$) is minimal (resp.
maximal) fulfilling this property.
This induces to think that the distance between two sets sharing the same radii can be measured, but this cannot be done in a proper way by taking the Hausdorff distance or even by a dilatation invariant version of it (c.f. \cite{Toth}),
since, e.g., a fixed triangle and its $60^\circ$ rotation would
have non-zero distance, even though we would consider them to be
equal (as all functional values we usually consider in that case
would be). On the other hand, affine invariant distance
(e.g.~distance $d_{BM}$) collapses affine classes into the same set,
thus equalizing all $n$-ellipsoids or all $n$-simplices. Precisely
because each triangle (as well as every ellipse) plays a central role
in the boundary of the Blaschke-Santal\'o diagrams (see
\cite{Br,BrGo,HC,HCS}) we avoid those measures. Consider \cite{Sch2}
for an example where comparable stability results are achieved for
an affine invariant measure (not fitting to our problem either).
A stability estimate
quantifies the deviation of a nearextremal
convex body from the extremal ones in a previously fix inequality (see \cite{Toth}).
The deviation depends on the metric used
for the convex bodies.

Part of our motivation comes from results obtained in
\cite{DPR,DPR4,Pr}, where the authors used Blaschke-Santal\'o (or
shape) diagrams to obtain results on image analysis and pattern
recognition. In particular, they study if ratios like $D(K)/R(K)$,
$r(K)/R(K)$ or $A(K)/p(K)$ (here $A(K)$ and $p(K)$ are the
area and perimeter of $K$ and the other magnitudes will be defined later in section~\S\ref{preliminares})
can be used as shape discriminants, by
checking how far those quotients are when considering similar sets
taken from their own database.

In relation with the distances between convex sets, there is a big
interest on studying the topological structure of some hyperspaces
of convex sets equipped with the Hausdorff metric. For instance it
is well known that the hyperspace $\mathcal K^n$ of all compact
convex subsets of $\mathbb R^n$ (with $n\geq 2$) is homeomorphic to
the Hilbert cube $Q=[0,1]^\infty$ with a point removed (see
\cite{Nadler}). Another example is the hyperspace $\mathcal K^{n}_0$
which is homeomorphic to $Q\times \mathbb R^{n(n+3)/2}$ (see
\cite{AntJon}).

The aim of this work consists in studying the  hyperspace $\mathcal
K^n_*$, of all non-degenerated compact convex subsets of $\mathbb
R^n$ from the point of view of the similar transformations. The
original idea of this work  was to construct a pseudometric in
$\mathcal K^n_*$ satisfying conditions (\ref{f:1}) and (\ref{f:2})
where $G=\Sim (n)$ is the group of all similarities of $\mathbb
R^n$. We do that in section \S \ref{s: metrica}. This pseudometric
measures how far  the shapes of two sets are from each other, therefore providing a
tool to compare the geometry of convex sets (improving $d_H$ and
dilatation-invariant pseudometrics) and also inducing a geometrically richer
quotient space than $\mathcal K^n_{0}/\Aff(n)$.

We will prove that the quotient space generated by this pseudometric
(which is the orbit space generated by the group of all similarities
in $\mathcal K^n_*$) is homeomorphic to the Banach-Mazur compactum,
while $\mathcal K^n_*$ is homeomorphic to $Q\times\mathbb R^{n+1}$
(\S\ref{s:U(n)}). This result answers, in a particular case, a
question made by the referee of \cite{AntJon} (\cite[Question
7.15]{AntJon}). In Section \S \ref{s:applications} we measure how
different  $K$ and $L$ are by means of our invariant under
similarities distance, in terms of $r, D$ and $R$, and thus deriving
several stability results of those functionals with respect to that distance.
Finally in section \S\ref{s:final}, we will provide another method
to generate pseudometrics in $\mathcal K^n_0$ which are invariant
under the action of other subgroups  of $\Aff(n)$.

\section{Preliminaries}\label{preliminares}

We will base most of our results on some techniques and   notions from the theory of topological transformation groups. This is why we recall here some basic definitions and results, but we  refer the reader to the monographs \cite{Bredon} and \cite{Palais} for a more extended review of the theory of $G$-spaces. .

If $G$ is a topological group and $X$ is a $G$-space, for any $x\in X$ we denote by $G_x$ the \textit{stabilizer}  of $x$, i.e., $G_x=\{g\in G \mid gx=x\}$. For a subset $S\subset X$ and a subgroup $H\subset G$, $H(S)$ denotes the \textit{$H$-saturation} of $S$, i.e., $H(S)=\{hs\mid h\in H,s\in S\}.$ If $H(S)=S$ then we say that $S$ is an \textit{$H$-invariant} set. In particular, $G(x)$ denotes the  \textit{$G$-orbit} of $x$, i.e., $G(x)=\{gx\in X\mid g\in G\}$. The set of all orbits equipped with the quotient topology is denoted by $X/G$ and is called the \textit{$G$-orbit space} of $X$ (or simply, the orbit space).

For each subgroup $H\subset G$, we denote by $X^H$ the \textit{$H$-fixed point set} which consists of all points $x\in X$ with $H\subset G_x$. Is not difficult to see that $X^{H}$ is a closed subset of $X$.

We say that a continuous map $f:X\to Y$ between two $G$-spaces is  $G$-\textit{equivariant} (or, simply, \textit{equivariant}) if
$f(gx)=gf(x)$ for every $x\in X$ and $g\in G$. On the other hand, if $f(gx)=f(x)$ for all $x\in X$ and $g\in G$, we will say that  the map $f$ is  \textit{$G$-invariant} (or \textit{invariant}).

%For any subgroup $H\subset G$, we will denote by $G/H$ the $G$-space of cosets $\{gH\mid g\in G\}$ equipped with  the action induced by left translations.

 A $G$-space $X$ is called \textit{proper} (in the sense of Palais \cite{Palais}) if it has an open cover consisting of, so called, {\it small} sets.
 A set  $S\subset X$ is called small if  any  point $x\in X$ has  a neighborhood $V$  such that the set $\langle S, V\rangle=\{g\in G\mid gS\cap V\neq\emptyset\}$, called the transporter from $S$ to $V$,  has compact closure in $G$.

% Each orbit in a proper $G$-space is closed, and each stabilizer is compact (\cite[Proposition 1.1.4]{Palais}).
%If $G$ is a locally compact group and $Y$ is a proper $G$-space, then for every point $y\in Y$ the orbit $G(y)$ is $G$-homeomorphic to $G/G_y$  \cite[Proposition 1.1.5]{Palais}.

For a given topological group $G$, a metrizable $G$-space $Y$ is called a $G$-\textit{equivariant absolute neighborhood retract}  (denoted by $Y\in G$-$\mathrm{ANR}$) if for any  metrizable $G$-space $M$ containing $Y$ as an invariant closed subset, there exist an invariant neighborhood $U$ of $Y$ in $M$ and a $G$-equivariant retraction
$r:U\to Y$. If we can always take $U=M$, then we say $Y$ is a $G$-\textit{equivariant absolute retract} (denoted by $Y\in G$-$\mathrm{AR}$).
As it happens in the non-equivariant case,  if $G$ is a compact group,  any $G$-invariant open subset of a $G$-$\rm{ANR}$ is a $G$-$\rm{ANR}$, and each $G$-contractible $G$-$\rm{ANR}$ is a $G$-$\rm{AR}$ (see, e.g., \cite{Antonyan 1980}). Recall that a $G$-space $X$ is \textit{$G$-contractible} if there exists a continuous homotopy $H:X\times[0,1]\to X$ and a $G$-fixed point $x_0\in X$ such that $H(x,0)=x$, $H(x,1)=x_0$ and $H(gx,t)=gH(x,t)$ for every $x\in X$, $g\in G$ and $t\in[0,1]$. If additionally $H(x,t)= x_0$ if and only if  $x= x_0$ or $t=1$ then we say that $X$ is \textit{$G$-strictly contractible} to $x_0$.

Let $X$ be a $G$-space and suppose that $d$ is a metric (pseudometric) in $X$. If $d(gx,gy)=d(x,y)$ for every $x,y\in X$ and $g\in G$, then we will say that  $d$ is  a \textit{$G$-invariant metric} (\textit{$G$-invariant pseudometric}) or simply an invariant metric (pseudometric). If $G$ acts on a metric space $(X,d)$ in such a way that $d$ is $G$-invariant, then we say that $G$ \textit{acts isometrically}.

For every compact group $G$ acting isometrically  on a metric space $(X,d)$,
it is well known \cite[Proposition 1.1.12]{Palais}
that the quotient topology of $X/G$ is generated by the metric
\begin{equation}\label{metrica en cociente}
d^*(G(x),G(y))=\inf\limits_{g\in G}\{d(x,gy)\}, ~~~~~\,\,~~~G(x),G(y)\in X/G.
\end{equation}

In this case, it is evident that
\begin{equation}\label{desigualdad metrica invariante}
d^*(G(x),G(y))\leq d(x,y),~~\,\,~~~x,y\in X.
\end{equation}

%In the sequel we will denote by $d$  the Euclidean metric  on $\mathbb R^{n}$.
%For any $A\subset\mathbb R^{n}$,  and $\varepsilon >0$, we  denote $N(A,\varepsilon)=\{x\in\mathbb R^{n}\mid d(x, A)<\varepsilon\}$. In particular,  for every $x\in\mathbb R^{n}$, $N(x, \varepsilon)$ denotes the open $\varepsilon$-ball around $x$.
%On the other hand, if $\mathcal C\subset cc(\Bbb R^n)$ then  for every $A\in \mathcal C$ we shall use $O(A,\varepsilon)$ for  the $\varepsilon$-open ball centered at the point $A$ in $\mathcal C$, i.e.,
%$$O(A,\varepsilon)=\{B\in \mathcal C\mid d_H(A, B)<\varepsilon\},$$
%where $d_H$ stands for the Hausdorff metric induced by $d$.

For every subset $A\subset X$ of a topological space $X$, we will use the symbol $\partial A$ to denote the boundary of $A$ in $X$.

Given two metric spaces $(M_1, d_1)$ and $(M_2,d_2)$,  a surjective map $f:M_1\to M_2$ is called a \textit{similarity}, provided that  there exists $\lambda>0$ such that
$$d_2(f(x), f(y))=\lambda d_1(x,y),\quad\text{for all } x, y\in M_1.$$
The constant $\lambda$ is called the \textit{ratio} of $f$.

If we consider the euclidean space $(\mathbb R^n, \|\cdot\|)$, the similarities of $\mathbb R^n$ constitute  a closed subgroup of $\Aff(n)$. We will denote this group by $\Sim(n)$. It is not difficult to see that every element  $g\in \Sim(n)$ is of the form $g(x)=u+\lambda\sigma(x)$, with $u\in\mathbb R^n$, $\lambda>0$ and $\sigma\in O(n)$, where $O(n)$ denotes the  orthogonal group.

Throughout this paper, $n$ will always denote a natural number equal or greater than $2$.
As we mentioned in the introduction, we will denote by $\mathcal K^n$ the set of all compact convex sets of $\mathbb R^n$. Two important subsets of $\mathcal K^n$ will be considered:
$\mathcal K^n_0=\{A\in\mathcal K^n\mid \dim A=n \}$, and
$\mathcal K^n_*=\{A\in\mathcal K^n\mid \dim A\geq 1 \}$.
Observe that $\mathcal K_0^n$ is the family of all convex bodies of $\mathbb R^n$, while $\mathcal K_*^n$ is the family of all non-degenerated compact convex subsets of $\mathbb R^n$.

In $\mathcal K^n$ we consider the Hausdorff distance $d_H$ induced by the euclidean distance $d$ in $\mathbb R^n$. We will also consider the natural action of $\Aff(n)$ on $\mathcal K^n$  defined through the formula
\begin{equation}\label{definicion accion}
(g, A)\longmapsto gA, \quad
gA=\{ga\mid a\in A\}.
\end{equation}
for every $g\in \Aff(n)$ and $A\in \mathcal K^n$.
Observe that the restriction of this action to $O(n)\times \mathcal K^n$ defines an isometric action (with respect to the Hausdorff distance).

For any $A\in \mathcal K ^n$, let us denote by $C(A)$ the
\textit{circumball} of $A$, i.e., $C(A)$ is the unique (euclidean)
ball of minimal volume containing the set $A$. The radius of $C(A)$,
denoted by $R(A)$, is called the \textit{circumradius}. The center
of $C(A)$ is the nearest point to $A$ (with respect to the Hausdorff
distance) and it always belongs to $A$. This point  is called the
\textit{Chebyshev point} of $A$ and will be denoted  by
$\check{c}(A)$. If we consider the Hausdorff distance in $\mathcal
K^n$, the map $\check{c}:\mathcal K^n\to \mathbb R^n$ is continuous
and $\Sim(n)$-equivariant, i.e., $\check{c}(gA)=g\check{c}(A)$ for
every $g\in \Sim (n)$ (see e.g. \cite{Mozsynska}).  On the other
hand, the map $R:\mathcal K^n\to[0,\infty)$ is always continuous and
satisfies:
$$R(\lambda A)=\lambda R(A),\quad \text{for every } A\in\mathcal K^n\text{ and }\lambda\geq 0.$$
Besides, since each element $\sigma$ of the orthogonal group $O(n)$ is an isometry, we also have that $R(\sigma A)=R(A)$, i.e., $R$ is an $O(n)$-invariant map.
For any $A\in\K^n$, the \textit{inradius} of $A$, $r(A)$, is the
biggest radius of an Euclidean ball contained in $A$, the
\textit{diameter} of $A$ (denoted by $D(A)$) is the biggest (Euclidean) distance between two
different points of $A$, and the \textit{(minimal) width} of $A$ (denoted by $w(A)$) is
the smallest (Euclidean) distance between two different supporting hyperplanes of $A$.

We will denote by $e^1,\dots,e^n$ the canonical basis of the
$n$-dimensional Euclidean space $\R^n$. By $\B$ we denote the
$n$-dimensional Euclidean closed unit ball and by $\s$ the
corresponding $(n-1)$-dimensional unit sphere, i.e.,
 \begin{align*}&\mathbb B=\big\{(x_1,\dots, x_n)\in\mathbb R^{n} \  \big | \ \sum_{i_1}^nx_i^2\leq 1\big\} \quad\text{and}\\
&\mathbb S=\big\{(x_1,\dots, x_n)\in\mathbb R^{n} \ \big | \  \sum_{i_1}^nx_i^2=1\big\}.
 \end{align*}
 The usual inner product of two vectors $u, v\in\mathbb R^n$ will be denoted by $u^{\top}v$,
 and if $u\in\mathbb R^n$, $u^{\bot}=\{x\in\mathbb R^n \ \big | \ x^{\top}u=0\}$
 will denote the hyperplane through the origin which is orthogonal to $u$.

The convex hull of $A$, $\conv (A)$ (affine hull $\aff (A)$, respectively) is
the smallest convex body (affine subspace, respectively) containing $A$.
Given $x,y\in\R^n$, $[x,y]=\conv (\{x,y\})$ represents the segment of end-points $x$
and $y$.

The Hilbert cube $[0,1]^\infty$ will be denoted by $Q$.
A Hilbert cube manifold or a $Q$-manifold is a separable, metrizable space that admits an open cover, each member of which is homeomorphic to an open subset of the Hilbert cube $Q$. We refer the reader to \cite{Chapman}, \cite{Torunczyk} and \cite{Van Mill} for an in-depth look at the theory of $Q$-manifolds.
\smallskip

A closed subset $A$ of a metric space $(X,d)$  is  called a $Z$-set if the set $\{f\in C(Q,X)\mid f(Q)\cap A=\emptyset\}$ is dense in $C(Q,X)$, being $C(Q,X)$ the space  of all continuous  maps from $Q$ to $X$ endowed with the compact-open topology.
In particular, if for every $\varepsilon>0$ there exists a map $f:X\to X\setminus A$ such that $d(x,f(x))<\varepsilon$, then $A$ is a $Z$-set.

\section{A pseudometric invariant under similarities}\label{s: metrica}
 The aim of  this section is to prove the following theorem, which was the original motivation of this work.
\begin{theorem}\label{t:mainpseudometric}
There exists a continuous pseudometric $\odot$ in $\mathcal K^n_*$ which satisfies the following two conditions.
\begin{enumerate}[\rm(1)]
\item $\odot(A,B)=0$ if and only if $A=gB$ for some $g\in \Sim(n)$.
\item $\odot(gA,hB)=\odot(A,B)$ for every $g, h\in\Sim(n)$.
\end{enumerate}
\end{theorem}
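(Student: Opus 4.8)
The plan is to reduce the full similarity group to its compact part $O(n)$ by a continuous, $\Sim(n)$--equivariant normalization, and then to quotient out $O(n)$ using the metric \eqref{metrica en cociente}. Recall that every $g\in\Sim(n)$ has the form $g(x)=u+\lambda\sigma(x)$ with $u\in\R^n$, $\lambda>0$ and $\sigma\in O(n)$, so $\Sim(n)$ is generated by translations, positive homotheties and orthogonal maps. The Chebyshev point $\check c$ is $\Sim(n)$--equivariant, while the circumradius satisfies $R(\lambda A)=\lambda R(A)$ and $R(\sigma A)=R(A)$; these two functionals are exactly what we need in order to cancel the translation and dilation directions, leaving only the compact residual symmetry $O(n)$.

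First I would define the normalization map $N\colon\mathcal K^n_*\to\mathcal K^n_*$ by
\[
N(A)=\frac{1}{R(A)}\bigl(A-\check c(A)\bigr).
\]
This is well defined because every $A\in\mathcal K^n_*$ has $\dim A\geq 1$, hence $R(A)>0$; it is continuous because $\check c$ and $R$ are continuous, $R$ is positive, and translation and scaling act continuously on $(\mathcal K^n,d_H)$. Each $N(A)$ has Chebyshev point at the origin and circumradius $1$. The key computation is that for $g(x)=u+\lambda\sigma(x)\in\Sim(n)$ one has $\check c(gA)=u+\lambda\sigma\check c(A)$ and $R(gA)=\lambda R(A)$, so that
\[
N(gA)=\frac{u+\lambda\sigma(A)-u-\lambda\sigma\check c(A)}{\lambda R(A)}=\sigma\!\left(\frac{A-\check c(A)}{R(A)}\right)=\sigma\,N(A).
\]
Thus $N$ intertwines the $\Sim(n)$--action on the source with the plain $O(n)$--action on the target, collapsing translations and homotheties entirely.

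Now I would use that $O(n)$ is compact and acts isometrically on $(\mathcal K^n_*,d_H)$, so that by \eqref{metrica en cociente} the formula $d_H^*(O(n)(X),O(n)(Y))=\inf_{\sigma\in O(n)}d_H(X,\sigma Y)$ defines a metric on $\mathcal K^n_*/O(n)$, and I would set
\[
\odot(A,B)=\inf_{\sigma\in O(n)}d_H\!\bigl(N(A),\sigma N(B)\bigr)=d_H^*\bigl(O(n)(N(A)),O(n)(N(B))\bigr).
\]
Property~(2) follows from the equivariance of $N$ together with the isometry of the $O(n)$--action: writing $N(gA)=\sigma_g N(A)$ and $N(hB)=\sigma_h N(B)$ and reindexing the infimum by $\tau=\sigma_g^{-1}\sigma\sigma_h$ returns exactly $\odot(A,B)$. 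For property~(1), compactness of $O(n)$ makes the infimum attained, so $\odot(A,B)=0$ if and only if $N(A)=\sigma N(B)$ for some $\sigma$; unwinding the definition of $N$ this says $A=u+\lambda\sigma(B)$ with $\lambda=R(A)/R(B)$ and $u=\check c(A)-\lambda\sigma\check c(B)$, i.e. $A=gB$ for some $g\in\Sim(n)$, the converse being immediate from $N(gB)=\sigma_g N(B)$.

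The routine pieces, namely symmetry and the triangle inequality, come for free by pushing everything through the genuine metric $d_H^*$, and continuity of $\odot$ follows by composing the continuous map $A\mapsto N(A)$ with the continuous quotient metric. I therefore expect the only point requiring care to be the continuity of $(X,Y)\mapsto\inf_{\sigma\in O(n)}d_H(X,\sigma Y)$, which is standard because $O(n)$ is compact so the infimum is attained and varies continuously, and the genuinely conceptual step to be the equivariance identity $N(gA)=\sigma_g N(A)$: this is precisely where the $\Sim(n)$--symmetry is reduced to the compact symmetry $O(n)$, and everything else is a direct verification.
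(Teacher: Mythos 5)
Your proposal is correct and follows essentially the same route as the paper: your normalization $N$ is exactly the paper's retraction $\varrho(A)=\frac{1}{R(A)}(A-\check c(A))$ onto $\mathcal B^n$, your identity $N(gA)=\sigma N(A)$ is the paper's equation for $\varrho(gA)$, and your $\odot$ coincides with the paper's pullback of the $O(n)$-orbit metric $d_H^*$ on $\mathcal B^n/O(n)$. The only cosmetic difference is that the paper routes the construction through an explicit homeomorphism $\mathcal K^n_*/\Sim(n)\cong\mathcal B^n/O(n)$, whereas you define $\odot$ directly by the infimum formula; the verifications are the same.
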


Let us start by  considering the set $\mathcal{B}^n$ consisting of all compact convex sets $A\in\mathcal K^n_*$ such that $C(A)=\mathbb B$.
\begin{theorem}\label{t:propiedades basicas de U(n)}
The set $\mathcal{B}^n$ satisfies the following conditions:

\begin{enumerate}[\rm(1)]
\item For every $A\in\mathcal K^n_*$ there exists $A'\in \mathcal{B}^n$ and $g\in \Sim(n)$ such that $gA'=A$.
\item $\mathcal{B}^n$ is $O(n)$-invariant.
\item  If $A\in \mathcal{B}^n$ and $g\in \Sim (n)$ is a similarity such that $gA\in \mathcal{B}^n$ then $g\in O(n)$.
\item $\mathcal{B}^n$ is compact.
\end{enumerate}
\end{theorem}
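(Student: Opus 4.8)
The plan is to treat the four assertions in turn, leaning on two facts recalled above: the Chebyshev point map $\check c$ is continuous and $\Sim(n)$-equivariant, and the circumradius $R$ is continuous with $R(\lambda A)=\lambda R(A)$ for $\lambda\geq 0$ and $R(\sigma A)=R(A)$ for $\sigma\in O(n)$ (and $R$ is clearly translation invariant). The observation that makes everything run is that the circumball is determined by these two quantities, namely $C(A)=\check c(A)+R(A)\mathbb B$; consequently $A\in\mathcal B^n$ if and only if $\check c(A)=0$ and $R(A)=1$. Note also that $R(A)=1>0$ forces $\dim A\geq 1$, so membership in $\mathcal B^n$ automatically entails membership in $\mathcal K^n_*$.

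For (1), given $A\in\mathcal K^n_*$ I would set $A':=g^{-1}A$, where $g\in\Sim(n)$ is the similarity $g(x)=\check c(A)+R(A)\,x$ (well defined since $R(A)>0$). Equivariance gives $\check c(A')=g^{-1}\check c(A)=0$, while the scaling and translation properties of $R$ give $R(A')=1$; hence $C(A')=\mathbb B$, so $A'\in\mathcal B^n$ and $gA'=A$. For (2), if $A\in\mathcal B^n$ and $\sigma\in O(n)$ then $C(\sigma A)=\sigma C(A)=\sigma\mathbb B=\mathbb B$, so $\sigma A\in\mathcal B^n$. For (3), I write $g(x)=u+\lambda\sigma(x)$ with $u\in\mathbb R^n$, $\lambda>0$, $\sigma\in O(n)$. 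The hypotheses $A,gA\in\mathcal B^n$ yield $R(A)=R(gA)=1$ and $\check c(A)=\check c(gA)=0$. From $R(gA)=\lambda R(A)$ I obtain $\lambda=1$, and from $\check c(gA)=g\check c(A)=g(0)=u$ I obtain $u=0$; therefore $g=\sigma\in O(n)$.

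The only assertion demanding genuine work is the compactness in (4), and even it reduces quickly. First, every $A\in\mathcal B^n$ satisfies $A\subset C(A)=\mathbb B$, so $\mathcal B^n$ is contained in the hyperspace $\mathcal K(\mathbb B)$ of all nonempty compact convex subsets of $\mathbb B$, which is compact in the Hausdorff metric by the Blaschke selection theorem. It then suffices to show $\mathcal B^n$ is closed in $\mathcal K(\mathbb B)$. By the characterization above, $\mathcal B^n=R^{-1}(\{1\})\cap\check c^{-1}(\{0\})$, which is closed since $R$ and $\check c$ are continuous: concretely, if $A_k\to A$ with $A_k\in\mathcal B^n$, then $R(A)=\lim_k R(A_k)=1$ and $\check c(A)=\lim_k\check c(A_k)=0$, whence $C(A)=\mathbb B$ and $A\in\mathcal B^n$. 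A closed subset of a compact space is compact, completing (4). I expect the main (and fairly mild) obstacle to be the clean reduction to $\mathcal K(\mathbb B)$ and the appeal to Blaschke's theorem; once the reduction to the level sets of $R$ and $\check c$ is in place, the rest follows directly from the equivariance and scaling properties already recorded.
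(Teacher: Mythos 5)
Your proof is correct and follows essentially the same route as the paper: normalize by the similarity carrying $\mathbb B$ to $C(A)$ for (1), use $\Sim(n)$-equivariance of the circumball for (2) and (3), and Blaschke selection plus continuity for (4). The only cosmetic differences are that you characterize $\mathcal B^n$ through the level sets of $\check c$ and $R$ rather than through $C$ directly, and in (3) you spell out explicitly (via the decomposition $g(x)=u+\lambda\sigma(x)$) what the paper dismisses as ``well known''; both are sound.
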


\begin{proof}
(1) For any $A\in \mathcal K^n_*$, there exists $g\in \Sim(n)$ such that $g\mathbb B=C(A)$ or, equivalently, $\mathbb B=g^{-1}C(A)$. Let $A':=g^{-1}A$. Evidently,
$$gA'=g(g^{-1}A)=(gg^{-1})A=A.$$
On the other hand, since $C$ is $\Sim(n)$-equivariant, we also have that
$$C(A')=C(g^{-1}A)=g^{-1}C(A)=g^{-1}g(\mathbb B)=\mathbb B$$
and therefore $A'\in \mathcal{B}^n$, as desired.

(2) Observe that $g\mathbb B=\mathbb B$ for every $g\in O(n)$. Using again the fact that $C$ is $\Sim(n)$-equivariant (and thus, $O(n)$-equivariant), we get that
$$C(gA)=gC(A)=g\mathbb B=\mathbb B, \quad\text{for every  }g\in O(n), ~A\in \mathcal{B}^n.$$
This last equality implies that $gA\in \mathcal{B}^n$ for all $g\in O(n)$ and $A\in \mathcal{B}^n$ or, in other words, $\mathcal{B}^n$ is $O(n)$-invariant.

(3) If $gA\in \mathcal{B}^n$ for some $A\in \mathcal{B}^n$ and $g\in\Sim(n)$, then
$$g\mathbb B=gC(A)=C(gA)=\mathbb B.$$
Now, it is well known that this last equality is only possible if $g\in O(n)$.

(4) Let $(A_n)_{n\in\mathbb N}$ be any sequence  in $\mathcal{B}^n$. Since all $A_n$ are  contained in $\mathbb B$, we can use Blaschke Selection Theorem (see, e.g.,  \cite[Theorem 1.8.6]{Schneider})  to conclude that there exists a subsequence $(A_{n_k})_{k\in\mathbb N}$ which converges (with respect  to the Hausdorff metric) to a compact convex subset $A\in\mathcal K^n$. Now, since $C:\mathcal K^n\to\mathcal K^n$ is continuous with respect to the Hausdorff metric, we conclude that
$$C(A)=\lim_{n\to \infty}C(A_n)=\lim_{n\to\infty}\mathbb B=\mathbb B.$$
This directly implies that $\mathcal{B}^n$ is compact, and now the proof is complete.
\end{proof}

\begin{lemma}\label{l:On retraction}
There exists an $O(n)$-equivariant retraction $\varrho:\mathcal K_*^n\to \mathcal{B}^n$ such that $\varrho(A)$ lies in the $\Sim(n)$-orbit of $A$, for every $A\in \mathcal K_*^n$.

\end{lemma}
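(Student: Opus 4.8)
The plan is to build $\varrho$ explicitly by normalizing each set through the unique similarity that carries its circumball onto $\mathbb B$. Concretely, for $A\in\mathcal K^n_*$ I would set
$$\varrho(A)=\frac{1}{R(A)}\bigl(A-\check{c}(A)\bigr),$$
where $A-\check{c}(A)=\{a-\check{c}(A)\mid a\in A\}$. Since $A$ is non-degenerated we have $R(A)>0$, so this is well defined, and $\varrho(A)$ is precisely the image of $A$ under the similarity $g_A\in\Sim(n)$ given by $g_A(x)=R(A)^{-1}(x-\check{c}(A))$. In particular $\varrho(A)$ lies in the $\Sim(n)$-orbit of $A$, as required, and $\dim\varrho(A)=\dim A\geq 1$, so $\varrho(A)\in\mathcal K^n_*$.

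To see that $\varrho(A)\in\mathcal{B}^n$, I would use that $C$ is $\Sim(n)$-equivariant: as $C(A)$ is the ball with center $\check{c}(A)$ and radius $R(A)$, applying $g_A$ sends it to the ball with center $0$ and radius $1$, i.e.\ $C(\varrho(A))=g_AC(A)=\mathbb B$. If moreover $A\in\mathcal{B}^n$, then $C(A)=\mathbb B$ forces $\check{c}(A)=0$ and $R(A)=1$, whence $g_A=\mathrm{id}$ and $\varrho(A)=A$; thus $\varrho$ restricts to the identity on $\mathcal{B}^n$ and is a retraction. For $O(n)$-equivariance, fix $\sigma\in O(n)$. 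Since $\check{c}$ is $\Sim(n)$-equivariant and $R$ is $O(n)$-invariant, we have $\check{c}(\sigma A)=\sigma\check{c}(A)$ and $R(\sigma A)=R(A)$, and because $\sigma$ is linear it commutes with scalar multiplication and with translating by a point, so
$$\varrho(\sigma A)=\frac{1}{R(A)}\bigl(\sigma A-\sigma\check{c}(A)\bigr)=\sigma\Bigl(\frac{1}{R(A)}(A-\check{c}(A))\Bigr)=\sigma\varrho(A),$$
which is exactly the desired equivariance.

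The only delicate point, and the one I expect to be the main obstacle, is the continuity of $\varrho$ with respect to the Hausdorff metric. I would handle it by factoring $\varrho$ through the natural action. The assignment $A\mapsto g_A$ defines a continuous map from $\mathcal K^n_*$ into $\Sim(n)$: its translation part $-\check{c}(A)$ and its ratio $R(A)^{-1}$ are continuous because $\check{c}$ and $R$ are continuous and $R$ is strictly positive on $\mathcal K^n_*$. Composing with the continuous action map $\Sim(n)\times\mathcal K^n\to\mathcal K^n$, $(g,B)\mapsto gB$, expresses $\varrho(A)=g_AA$ as a composition of continuous maps, hence continuous. (If one prefers not to invoke continuity of the action directly, the same conclusion follows from an explicit Hausdorff-distance estimate: near a fixed $A_0$ all nearby sets $A$ lie in a common ball, and on that ball $g_A\to g_{A_0}$ uniformly as $A\to A_0$, so $d_H(g_AA,g_{A_0}A_0)\to 0$.) Either route establishes all the stated properties and completes the construction.
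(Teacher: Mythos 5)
Your proposal is correct and follows essentially the same route as the paper: the same explicit formula $\varrho(A)=R(A)^{-1}(A-\check{c}(A))$, the same similarity $g_A$ witnessing membership in the $\Sim(n)$-orbit, and the same equivariance computation via $O(n)$-invariance of $R$ and $\Sim(n)$-equivariance of $\check{c}$. The only difference is that you spell out the continuity argument (which the paper dismisses as obvious), and your justification via continuity of the action map $\Sim(n)\times\mathcal K^n\to\mathcal K^n$ is sound.
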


\begin{proof}
Define $\varrho:\mathcal K_*^n\to \mathcal{B}^n$ by the rule:
$$\varrho(A)=\frac{1}{R(A)}(A-\check{c}(A)), \quad A\in\mathcal K_*^n,$$
where $R(A)$ is the circumradius of $A$ and $\check{c}(A)$ is the Chebyshev point (circumcenter) of $A$.
Since $R(A)>0$ for every $A\in\mathcal K_*^n$, the map $\varrho$ is well-defined and is obviously continuous.
Furthermore, for any  $A\in \mathcal K_*^n$ we observe that
$R(\varrho(A))=1$ and $\check{c}(A)=0$ and thus $\varrho(A)\in \mathcal{B}^n$.
On the other hand, if $A\in \mathcal{B}^n$, then $R(A)=1$, $\check{c}(A)=0$ and therefore $\varrho(A)=A$. So, $\varrho$ is a retraction.

Now, if we fix  $A\in \mathcal{K}_*^n$, the map
$g_A:\mathbb R^n\to\mathbb R^n$ defined by
$$g_A(x)=\frac{1}{R(A)}(x-\check{c}(A)), \quad x\in\mathbb R^n,$$
is a similarity and thus $g_A\in \Sim(n)$. Since $\varrho(A)=g_AA$, it is clear that $\varrho(A)$ lies in the $\Sim(n)$-orbit of $A$, as desired.

Finally, since $R$ is $O(n)$-invariant and $\check{c}$ is $O(n)$-equivariant (in fact, it is $\Sim(n)$-equivariant), if $g\in O(n)$ then  we have   $R(gA)=R(A)$ and $\check{c}(gA)=g\check{c}(A)$.  Using the linearity of $g$ we get
\begin{align*}
\varrho(gA)&=\frac{1}{R(gA)}(gA-\check{c}(gA))=\frac{1}{R(A)}(gA-g\check{c}(A))\\
&=g\Big(\frac{1}{R(A)}(A-\check{c}(A))\Big)=g\varrho(A).
\end{align*}
This proves that $\varrho$ is O(n)-equivariant and now the proof is complete.

\end{proof}

It is not difficult to see that the map $\varrho$ constructed above satisfies the following two equalities:

\begin{equation}\label{f:igualdadesvarro}
\varrho(\lambda A)=\varrho(A),\quad\text{and}\quad\varrho(A+u)=\varrho(A),
\end{equation}
for every $A\in\mathcal K^n$, $u\in\mathbb R^n$, and $\lambda>0$.
As consequence, if a similarity  $g\in \Sim(n)$ is written  as $g(x)=\lambda\sigma(x)+u$ for some $\lambda>0$, $\sigma\in O(n)$ and $u\in\mathbb R^n$, we get that
\begin{equation}\label{f.varrosacasigma}
\varrho(gA)=\sigma\varrho(A).
\end{equation}

The next result will be used later in section~\ref{s:U(n)} in order to prove that $\mathcal K^n_*$ is homeomorphic to $Q\times\mathbb R^{n+1}$.

\begin{proposition}\label{p:Kigual Upor R}
The hyperspace $\mathcal K^n_*$ is homeomorphic to $\mathcal{B}^n\times \mathbb R^{n+1}$
\end{proposition}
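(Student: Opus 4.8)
The plan is to exhibit an explicit homeomorphism built directly from the retraction of Lemma~\ref{l:On retraction} together with the Chebyshev point and the circumradius. Define $\Phi:\mathcal K^n_*\to\mathcal{B}^n\times\R^n\times\R$ by
\[
\Phi(A)=\big(\varrho(A),\,\check{c}(A),\,\ln R(A)\big),
\]
and identify $\R^n\times\R$ with $\R^{n+1}$. Since $R(A)>0$ for every $A\in\mathcal K^n_*$, the composite $\ln R$ is well defined, and because $\varrho$, $\check{c}$ and $R$ are all continuous with respect to the Hausdorff metric (the first by Lemma~\ref{l:On retraction}, the latter two as recorded in Section~\ref{preliminares}), the map $\Phi$ is continuous. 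Note that the logarithm is inserted precisely so that the last coordinate ranges over all of $\R$ rather than over $(0,\infty)$.

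Next I would verify that $\Phi$ is a bijection by producing its inverse explicitly. Define $\Psi:\mathcal{B}^n\times\R^n\times\R\to\mathcal K^n_*$ by $\Psi(B_0,c,t)=e^{t}B_0+c$ (this lands in $\mathcal K^n_*$ since scaling and translating preserves dimension). For $B_0\in\mathcal{B}^n$ one has $\check{c}(B_0)=0$ and $R(B_0)=1$; applying the $\Sim(n)$-equivariance of $\check{c}$ and the scaling and translation behaviour of $R$ to the similarity $x\mapsto e^t x+c$ gives $\check{c}(\Psi(B_0,c,t))=c$ and $R(\Psi(B_0,c,t))=e^{t}$, whence $\varrho(\Psi(B_0,c,t))=e^{-t}(e^t B_0+c-c)=B_0$. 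This shows $\Phi\circ\Psi=\mathrm{id}$. Conversely, for $A\in\mathcal K^n_*$, writing $\lambda=R(A)$ and $c=\check{c}(A)$, one computes $\Psi(\Phi(A))=\lambda\,\varrho(A)+c=\lambda\cdot\tfrac{1}{\lambda}(A-c)+c=A$, so $\Psi\circ\Phi=\mathrm{id}$ as well. Hence $\Phi$ is a continuous bijection with inverse $\Psi$.

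It remains to prove that $\Psi$ is continuous, which is the only non-formal point and the step I expect to be the main (though still routine) obstacle. The clean way is to invoke the continuity of the natural action $\Sim(n)\times\mathcal K^n\to\mathcal K^n$ with respect to the Hausdorff metric, combined with the continuity of the parametrization $(t,c)\mapsto(x\mapsto e^t x+c)$ from $\R\times\R^n$ into $\Sim(n)$; then $\Psi$ factors as $(B_0,c,t)\mapsto\big((x\mapsto e^tx+c),\,B_0\big)\mapsto e^tB_0+c$ and is therefore continuous. Alternatively one argues directly: if $(B_k,c_k,t_k)\to(B_0,c,t)$, a standard Hausdorff-distance estimate controlling the effect of scaling by $e^{t_k}$ and translating by $c_k$ yields $e^{t_k}B_k+c_k\to e^tB_0+c$. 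Everything else being algebra built on the properties of $\varrho$, $\check{c}$ and $R$ already established, having both $\Phi$ and $\Psi=\Phi^{-1}$ continuous completes the proof that $\mathcal K^n_*$ is homeomorphic to $\mathcal{B}^n\times\R^{n+1}$.
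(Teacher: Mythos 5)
Your proof is correct and follows essentially the same route as the paper: the paper's map $\eta(A)=\big(\varrho(A),\check{c}(A),R(A)\big)$ into $\mathcal{B}^n\times\mathbb R^n\times(0,\infty)$ with inverse $(A,x,\lambda)\mapsto\lambda A+x$ is exactly your $\Phi$ and $\Psi$ up to the (harmless) reparametrization $\lambda=e^t$ of the last factor. Your additional verifications of the inverse identities and of the continuity of $\Psi$ are fine and merely spell out what the paper calls ``obviously continuous''.
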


\begin{proof}
Define $\eta:\mathcal K^n_*\to \mathcal{B}^n\times \mathbb R^n\times (0,\infty)$ by the rule
$$\eta(A)=\big(\varrho(A), \check{c}(A), R(A)\big)\quad A\in\mathcal K^n,$$
where $\varrho$ is the retraction of Lemma~\ref{l:On retraction}, $\check{c}(A)$ is the
 Chebyshev point of $A$ and $R(A)$ is the circumradius of $A$.
 It is easy to verify that  $\eta$ is a bijective map whose inverse map is given by the rule
 $$(A, x, \lambda)\rightarrow\lambda A+x.$$
The maps $\eta$ and $\eta^{-1}$ are obviously continuous, and therefore $\eta$ is a homeomorphism.
\end{proof}

Consider now the orbit spaces $\mathcal K_*^n/\Sim(n)$ and $\mathcal{B}^n/O(n)$. To simplify the notation, the class of each $A\in\mathcal K^n_*$  in $\mathcal K_*^n/\Sim(n)$ will be denoted by $[A]$ instead of $\Sim(n)(A)$. On the other hand, the elements of $\mathcal{B}^n/O(n)$ will be denoted, as usual, by $O(n)(A)$ for each $A\in \mathcal{B}^n$.

\begin{proposition}\label{p:KyU son homeo}
$\mathcal K_*^n/\Sim(n)$ is homeomorphic to $\mathcal{B}^n/O(n)$.
\end{proposition}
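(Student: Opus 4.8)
The plan is to produce an explicit pair of mutually inverse continuous maps between the two orbit spaces, one induced by the inclusion $\mathcal{B}^n\hookrightarrow\mathcal K_*^n$ and the other induced by the retraction $\varrho$ of Lemma~\ref{l:On retraction}. Write $\pi_1:\mathcal K_*^n\to\mathcal K_*^n/\Sim(n)$, $\pi_1(A)=[A]$, and $\pi_2:\mathcal{B}^n\to\mathcal{B}^n/O(n)$, $\pi_2(A)=O(n)(A)$, for the two quotient maps. First I would define $\bar\iota:\mathcal{B}^n/O(n)\to\mathcal K_*^n/\Sim(n)$ by $\bar\iota(O(n)(A))=[A]$. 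This is well defined because $O(n)\subset\Sim(n)$, so $O(n)$-equivalent sets are automatically $\Sim(n)$-equivalent. Continuity is formal: the composition $\mathcal{B}^n\hookrightarrow\mathcal K_*^n\xrightarrow{\pi_1}\mathcal K_*^n/\Sim(n)$ is continuous and constant on $O(n)$-orbits, so by the universal property of the quotient map $\pi_2$ it factors through $\bar\iota$, which is therefore continuous.

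Next I would define the candidate inverse $\bar\varrho:\mathcal K_*^n/\Sim(n)\to\mathcal{B}^n/O(n)$ by $\bar\varrho([A])=O(n)(\varrho(A))$. The crucial point is well-definedness: if $[A]=[B]$, say $B=gA$ with $g(x)=\lambda\sigma(x)+u$ for some $\lambda>0$, $\sigma\in O(n)$, $u\in\mathbb R^n$, then equation~\eqref{f.varrosacasigma} gives $\varrho(B)=\varrho(gA)=\sigma\varrho(A)$, so $\varrho(A)$ and $\varrho(B)$ lie in the same $O(n)$-orbit and $O(n)(\varrho(A))=O(n)(\varrho(B))$. Continuity is again obtained from the universal property: the composition $\mathcal K_*^n\xrightarrow{\varrho}\mathcal{B}^n\xrightarrow{\pi_2}\mathcal{B}^n/O(n)$ is continuous and $\Sim(n)$-invariant (this invariance is exactly what \eqref{f.varrosacasigma} expresses at the level of orbits), so it factors through the continuous map $\bar\varrho$.

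Finally I would check that $\bar\iota$ and $\bar\varrho$ are mutually inverse. For $A\in\mathcal{B}^n$ we have $\varrho(A)=A$ because $\varrho$ restricts to the identity on $\mathcal{B}^n$, hence
\[
\bar\varrho\bigl(\bar\iota(O(n)(A))\bigr)=\bar\varrho([A])=O(n)(\varrho(A))=O(n)(A),
\]
so $\bar\varrho\circ\bar\iota=\mathrm{id}$. Conversely, since $\varrho(A)$ lies in the $\Sim(n)$-orbit of $A$ for every $A\in\mathcal K_*^n$ (Lemma~\ref{l:On retraction}), we get $[\varrho(A)]=[A]$ and thus
\[
\bar\iota\bigl(\bar\varrho([A])\bigr)=\bar\iota\bigl(O(n)(\varrho(A))\bigr)=[\varrho(A)]=[A],
\]
so $\bar\iota\circ\bar\varrho=\mathrm{id}$. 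Therefore $\bar\iota$ is a continuous bijection with continuous inverse $\bar\varrho$, i.e.\ a homeomorphism.

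I expect the only genuinely substantive step to be the well-definedness of the two maps, and this is precisely where the earlier structural results do the work: surjectivity onto $\Sim(n)$-orbits rests on part~(1) of Theorem~\ref{t:propiedades basicas de U(n)}, the fact that $\Sim(n)$-equivalence collapses to $O(n)$-equivalence inside $\mathcal{B}^n$ rests on part~(3), and the compatibility of $\varrho$ with similarities rests on \eqref{f.varrosacasigma} (itself a consequence of \eqref{f:igualdadesvarro}). Once these are in place the argument is purely formal, since constructing an explicit continuous inverse sidesteps any need to verify compactness of $\mathcal{B}^n/O(n)$ or the Hausdorff property of $\mathcal K_*^n/\Sim(n)$; the compactness in part~(4) is not needed here.
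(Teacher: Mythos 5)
Your proof is correct and follows essentially the same route as the paper: both arguments use the map induced by the inclusion $\mathcal{B}^n\hookrightarrow\mathcal K_*^n$ and the map induced by $\varrho$ (factored through the quotients via the universal property, which is the paper's appeal to the Transgression Theorem), resting on the same facts that $\varrho$ restricts to the identity on $\mathcal{B}^n$, that $\varrho(A)$ lies in the $\Sim(n)$-orbit of $A$, and that $\varrho(gA)=\sigma\varrho(A)$. The only cosmetic difference is that your direct check of the two compositions delivers bijectivity at once, whereas the paper argues surjectivity and injectivity of $\varphi$ separately (the latter via Theorem~\ref{t:propiedades basicas de U(n)}(3)) before producing the inverse.
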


\begin{proof}
Let $\pi: \mathcal{B}^n\to \mathcal K_*^n/\Sim(n)$ be the restriction to $\mathcal{B}^n$ of the orbit map. By Theorem \ref{t:propiedades basicas de U(n)}-(1), the map $\pi$ is onto and therefore  $\mathcal K_*^n/\Sim(n)$ is a compact space.  On the other hand, since $\pi$ is $O(n)$-invariant, it naturally induces a continuous onto map $\varphi:\mathcal{B}^n/O(n)\to \mathcal K^n_*/\Sim(n)$.

Now, if $[A]=\varphi(O(n)(A))=\varphi(O(n)(B))=[B]$ for $A, B\in \mathcal{B}^n$,  then we can find $g\in \Sim(n)$ such that $A=gB$. By Theorem~\ref{t:propiedades basicas de U(n)}-(3), this only can happen if $g\in O(n)$, which yields that $O(n)(A)=O(n)(B)$. In other words, $\varphi$ is an injective map.

To prove that $\varphi$ is a homeomorphism, let us consider the retraction $\varrho:\mathcal K^n_*\to \mathcal{B}^n$ of Lemma~\ref{l:On retraction}. Since $\varrho$ is $O(n)$-equivariant, it induces a continuous map
$\widetilde \varrho:\mathcal K^n_*\to \mathcal{B}^n/O(n)$ by the formula
$$\widetilde \varrho(A):=O(n)\big(\varrho(A)\big).$$
Now, if $A$ and $B$ belong to the same $\Sim(n)$-orbit, then there exists $g\in \Sim(n)$ such that
$A=gB$. Suppose that $g(x)=\lambda\sigma(x)+u$  for every $x\in \mathbb R$, where $\lambda>0$, $u\in\mathbb R^n$ and $\sigma\in O(n)$. Then using equality~\ref{f.varrosacasigma} we get
\begin{align*}
\varrho(A)=\varrho(gB)=\sigma (\varrho(B)).
\end{align*}
From this fact we infer that $\widetilde \varrho(A)=\widetilde \varrho(B)$ and therefore $\widetilde \varrho$ is constant in the $\Sim (n)$-orbits. Thus, we can use the Transgression Theorem (see e.g., \cite[Chap. IV,  Theorem 3.2]{Dugundji}) to conclude that there exists a unique continuous map $\psi:\mathcal K^n_*/\Sim(n)\to \mathcal{B}^n/O(n)$ such that $\psi\circ\pi=\widetilde\varrho$.
To finish the proof simply observe that $\psi=\varphi^{-1}$, and therefore $\varphi$ is a homeomorphism.
\end{proof}

Since $O(n)$ is a compact group acting isometrically in $\mathcal{B}^n$ (with respect to the Hausdorff metric), we can use the formula (\ref{metrica en cociente}) to define a compatible metric in $\mathcal{B}^n/O(n)$ by the rule
$$d^*_H(O(n)(A),O(n)(B))=\inf\{d_H(gA,B)\mid g\in O(n)\}.$$
Now, since $\mathcal K^n_*/\Sim(n)$ and $\mathcal{B}^n/O(n)$ are homeomorphic spaces, there is an obvious way to define a compatible metric $\Theta$ in $ \mathcal K^n_*/\Sim(n)$ as follows
$$\Theta ([A], [B])=d^*_H\big(\psi([A]),\psi([B])\big),$$
where $\psi:\mathcal K^n_*/\Sim\to \mathcal{B}^n/O(n)$ is the homeomorphism constructed in the proof of Proposition~\ref{p:KyU son homeo}.

Now we have the necessary tools to prove the main theorem of this section.

\begin{proof}[Proof of Theorem~\ref{t:mainpseudometric}]
Using the metric  $\Theta$ defined above, we can define a pseudometric in $\mathcal K^n_*$ in the following natural way
$$\odot(A,B):=\Theta([A],[B]).$$
Since $\odot$ is the composition of continuous maps, it is continuous too.
Obviously, $\odot(A,B)=0$ if and only if $[A]=[B]$ which in turns proves (1).
To prove (2), simply observe that
$$\odot(gA,hB)=\Theta([gA],[hB])=\Theta([A],[B])=\odot(A,B).$$
Now the proof is complete.
\end{proof}

In section~\ref{s:applications} we will show some applications of
the pseudometric $\odot$ by providing upper bounds in terms of known
invariant under similarities geometric functionals.

\section{The topological structure of $\mathcal{B}^n/O(n)$}\label{s:U(n)}

In this section we will prove that the quotient space $\mathcal{B}^n/O(n)$ is in fact homeomorphic to the Banach-Mazur compactum, $BM(n)$. The main idea consists in proving that $\mathcal{B}^n$ is a Hilbert cube where the natural action of $O(n)$ satisfies the conditions of \cite[Theorem 3.3]{Antonyan 2007}.
For that purpose let us  consider the family $M(n)$ consisting of all compact and convex sets $A\subset\mathbb B$ such that the intersection with the unitary sphere $\mathbb S$ is non-empty. Observe that $\mathcal{B}^n$ is an $O(n)$-invariant subset of $M(n)$. In \cite[\S 4 ]{AntJon}, several properties concerning the hyperspace $M(n)$ were proved.
It is our interest to prove that $\mathcal{B}^n$ satisfies the same properties.

\begin{lemma}\label{l:strictly On contractible}
 $\mathbb B$ is the only $O(n)$-fixed point in $\mathcal{B}^n$, and $\mathcal{B}^n$ is strictly $O(n)$-contractible to $\mathbb B$.
\end{lemma}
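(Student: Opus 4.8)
The plan splits according to the two assertions. For the identification of fixed points, I would characterize the $O(n)$-fixed points of $\mathcal{B}^n$ directly. Suppose $A\in\mathcal{B}^n$ satisfies $gA=A$ for all $g\in O(n)$. Since $O(n)$ acts transitively on each sphere centered at the origin, any point $x\in A$ forces the entire sphere of radius $\|x\|$ into $A$, and convexity then forces the whole ball of that radius into $A$; hence $A$ is a Euclidean ball centered at the origin, say $A=\rho\mathbb{B}$ with $\rho>0$ (positivity because $\dim A\geq 1$). But $A\in\mathcal{B}^n$ means $C(A)=\mathbb{B}$, and $C(\rho\mathbb{B})=\rho\mathbb{B}$, so $\rho=1$ and $A=\mathbb{B}$. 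Conversely $\mathbb{B}$ is plainly $O(n)$-fixed, so it is the unique $O(n)$-fixed point of $\mathcal{B}^n$; this is exactly the fixed point to which we must contract.

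For the strict contractibility I would use the homotopy $H:\mathcal{B}^n\times[0,1]\to\mathcal{B}^n$ given by
$$H(A,t)=\varrho\big((1-t)A+t\mathbb{B}\big),$$
where $\varrho$ is the $O(n)$-equivariant retraction of Lemma~\ref{l:On retraction} and the combination is the Minkowski combination. Continuity is clear, since Minkowski addition and scalar multiplication are continuous for the Hausdorff metric and $\varrho$ is continuous; each argument $(1-t)A+t\mathbb{B}$ lies in $\mathcal{K}^n_*$ (being full-dimensional for $t>0$, and equal to $A$ for $t=0$), so $\varrho$ applies and $H$ lands in $\mathcal{B}^n$. At $t=0$ the argument is $A\in\mathcal{B}^n$, so $H(A,0)=\varrho(A)=A$; at $t=1$ the argument is $\{0\}+\mathbb{B}=\mathbb{B}$, so $H(A,1)=\varrho(\mathbb{B})=\mathbb{B}$. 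Equivariance follows because $g\mathbb{B}=\mathbb{B}$ and $g$ is linear, whence $(1-t)(gA)+t\mathbb{B}=g\big((1-t)A+t\mathbb{B}\big)$, and then the $O(n)$-equivariance of $\varrho$ gives $H(gA,t)=gH(A,t)$.

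For the strictness clause I would first record that $\varrho(K)=\mathbb{B}$ if and only if $K$ is a Euclidean ball: indeed $\varrho(K)=\tfrac{1}{R(K)}(K-\check{c}(K))=\mathbb{B}$ is equivalent to $K=R(K)\mathbb{B}+\check{c}(K)$. Therefore $H(A,t)=\mathbb{B}$ iff $(1-t)A+t\mathbb{B}$ is a ball. The main point — and the step I expect to require the most care — is that for $t<1$ this forces $A$ to be a ball, hence $A=\mathbb{B}$ by the first part. I would argue via support functions: on the unit sphere the support function of $(1-t)A+t\mathbb{B}$ is $u\mapsto (1-t)h_A(u)+t$, while that of a ball $\rho\mathbb{B}+c$ is $u\mapsto c^{\top}u+\rho$; equating them and dividing by $1-t\neq 0$ exhibits $h_A$ as the support function of a ball, so $A$ is a ball. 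Combining, $H(A,t)=\mathbb{B}$ exactly when $A=\mathbb{B}$ or $t=1$, which is precisely the strictness condition, and together with the first paragraph this completes the proof.
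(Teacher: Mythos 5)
Your proof is correct and follows essentially the same route as the paper: the paper contracts via the very same Minkowski interpolation $H(A,t)=(1-t)A+t\mathbb B$, simply asserting that it stays in $\mathcal{B}^n$ and that $H(A,t)=\mathbb B$ only when $t=1$ or $A=\mathbb B$. Your post-composition with $\varrho$ is harmless but redundant, since the circumball of $K+\rho\mathbb B$ is the ball concentric with $C(K)$ of radius $R(K)+\rho$, so $(1-t)A+t\mathbb B$ already has circumball $(1-t)\mathbb B+t\mathbb B=\mathbb B$; your support-function argument for strictness supplies exactly the detail the paper leaves to the reader.
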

\begin{proof}
Obviously, $\mathbb B$ is the only $O(n)$-fixed point contained in $\mathcal{B}^n$. To prove the lemma, simply consider the homotopy $H:\mathcal{B}^n\times[0,1]\to \mathcal{B}^n$ defined by
$$H(A,t)=(1-t)A+t\mathbb B.$$
Evidently
 $H(A,0)=A$ and $H(gA,t)=gH(A,t)$ for every $g\in O(n)$, $A\in \mathcal{B}^n$ and $t\in[0,1]$. Furthermore
  $H(A,t)=\mathbb B$ if and only if $t=1$ or $A=\mathbb B$. Thus $\mathcal{B}^n$ is $O(n)$-strictly contractible to $\mathbb B^n$, as desired.
\end{proof}

\begin{lemma}\label{l:On AR}
$\mathcal{B}^n$ is an $O(n)$-$\mathrm{AR}$.
\end{lemma}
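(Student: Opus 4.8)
The plan is to reduce the statement to two formal facts about equivariant retracts: that $\mathcal{B}^n$ is an $O(n)$-equivariant retract of some $O(n)$-$\mathrm{ANR}$, and that a $G$-contractible $G$-$\mathrm{ANR}$ is a $G$-$\mathrm{AR}$ whenever $G$ is compact. Since $O(n)$ is compact and $\mathcal{B}^n$ is $O(n)$-contractible by Lemma~\ref{l:strictly On contractible}, the fact recalled in Section~\ref{preliminares} shows it is enough to prove that $\mathcal{B}^n$ is an $O(n)$-$\mathrm{ANR}$; the contractibility then upgrades this to $O(n)$-$\mathrm{AR}$.

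First I would produce a large ambient $O(n)$-$\mathrm{ANR}$ containing $\mathcal{B}^n$, taking $\mathcal{K}^n_*$ itself. To see $\mathcal{K}^n_*\in O(n)$-$\mathrm{ANR}$ I would first recall that the full hyperspace $\mathcal{K}^n$ is an $O(n)$-$\mathrm{AR}$: the assignment $A\mapsto h_A$ sending a compact convex set to its support function restricted to $\mathbb{S}$ is an $O(n)$-equivariant isometric embedding of $(\mathcal{K}^n,d_H)$ onto a closed convex $O(n)$-invariant subset of the Banach $O(n)$-space $C(\mathbb{S})$, where $O(n)$ acts by $(\sigma f)(u)=f(\sigma^{-1}u)$; by the equivariant version of Dugundji's theorem, a closed convex invariant subset of a Banach $G$-space with $G$ compact is a $G$-$\mathrm{AR}$ (alternatively one cites the corresponding statement from \cite{AntJon}). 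Next I would observe that $\mathcal{K}^n_*$ is an $O(n)$-invariant \emph{open} subset of $\mathcal{K}^n$: the condition $\dim A\geq 1$ is equivalent to $\diam(A)>0$, and since $\diam$ satisfies $|\diam(A)-\diam(B)|\leq 2\,d_H(A,B)$ it is continuous, so $\{A\mid\diam(A)>0\}$ is open. As recalled in Section~\ref{preliminares}, an invariant open subset of a $G$-$\mathrm{ANR}$ is again a $G$-$\mathrm{ANR}$, whence $\mathcal{K}^n_*\in O(n)$-$\mathrm{ANR}$.

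Second, I would exhibit $\mathcal{B}^n$ as an $O(n)$-retract of $\mathcal{K}^n_*$, which is exactly the content of Lemma~\ref{l:On retraction}: the map $\varrho\colon\mathcal{K}^n_*\to\mathcal{B}^n$ is a continuous $O(n)$-equivariant retraction, and $\mathcal{B}^n$ is a closed (in fact compact, by Theorem~\ref{t:propiedades basicas de U(n)}) invariant subset of $\mathcal{K}^n_*$. Since an $O(n)$-equivariant retract of an $O(n)$-$\mathrm{ANR}$ is again an $O(n)$-$\mathrm{ANR}$ (a standard fact of equivariant $\mathrm{ANR}$ theory, see e.g. \cite{Antonyan 1980}), this gives $\mathcal{B}^n\in O(n)$-$\mathrm{ANR}$; combined with the contractibility reduction of the first paragraph, the proof is complete.

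I expect the main obstacle to be the very first input, namely the $O(n)$-$\mathrm{AR}$ property of the ambient hyperspace, since this is where the genuinely equivariant (rather than merely topological) structure enters; everything downstream is a formal manipulation of retracts and invariant open sets. A subtle point worth checking carefully is that the explicit retraction $\varrho$ is defined precisely on $\mathcal{K}^n_*$ and cannot be extended continuously over the degenerate one-point sets, because the normalized limit of a shrinking segment depends on the direction of approach; this is why it is convenient to delete those points and work with the open invariant subset $\mathcal{K}^n_*$, on which $\varrho$ is available, rather than with a larger compact space such as $M(n)$.
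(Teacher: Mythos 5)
Your proposal is correct and follows essentially the same route as the paper's proof: deduce that $\mathcal{K}^n_*$ is an $O(n)$-$\mathrm{ANR}$ as an invariant open subset of the $O(n)$-$\mathrm{AR}$ $\mathcal{K}^n$, pass to $\mathcal{B}^n$ via the equivariant retraction $\varrho$ of Lemma~\ref{l:On retraction}, and upgrade to $O(n)$-$\mathrm{AR}$ using the $O(n)$-contractibility from Lemma~\ref{l:strictly On contractible}. The only cosmetic difference is that you sketch the support-function argument for $\mathcal{K}^n\in O(n)$-$\mathrm{AR}$, whereas the paper simply cites \cite[Corollary 4.8]{Antonyan 2005}.
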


\begin{proof}
By \cite[Corollary 4.8]{Antonyan 2005}  the hyperspace $\mathcal K^n$ is an $O(n)$-$\rm{AR}$. Since $\mathcal K^n_*$ is an $O(n)$-invariant open subset of $\mathcal K^n$, it follows that $\mathcal K^n_*$ is an $O(n)$-$\rm{ANR}$. Now, using the fact that $\mathcal{B}^n$ is an $O(n)$-retract of $\mathcal K^n_*$  (Lemma~\ref{l:On retraction}) we conclude that $\mathcal{B}^n$ is an $O(n)$-$\rm{ANR}$ too. Finally, since $\mathcal{B}^n$ is $O(n)$-contractible (by Lemma~\ref{l:strictly On contractible}) we infer that $\mathcal{B}^n$ is an $O(n)$-$\rm{AR}$  (see, e.g.,  \cite{Antonyan 1980}), as required.
\end{proof}

Recall that an action of a topological group $G$ on a topological space $X$ is transitive if $G(x)=X$ for every $x\in X$. In particular, the natural action of $O(n)$ on the sphere $\mathbb S$ is transitive.
According to \cite[Proposition 4.6]{AntJon}, for  each closed subgroup  $G\subset O(n)$
 that acts non-transitively on $\mathbb S$ and each $\varepsilon > 0$,  there exists a $G$-equivariant map $\chi_\varepsilon: M(n)\to M_0(n):=M(n)\setminus \{\mathbb B\}$ which is $\varepsilon$-close to the identity map of $M(n)$ (with respect to the Hausdorff distance in $M(n)$).

If we consider the restriction $\chi_\varepsilon|_{\mathcal{B}^n}$ we obtain a $G$-equivariant map
$$\chi_\varepsilon|_{\mathcal{B}^n}:\mathcal{B}^n\to {\mathcal B_0^n}:=\mathcal{B}^n\setminus\{\mathbb B\}$$
which is $\varepsilon$-close to the identity map of $\mathcal{B}^n$. Namely,
$$d_H\big(A, \chi_\varepsilon(A)\big)<\varepsilon.$$
Since $\chi_{\varepsilon}$ is  $G$-equivariant,
the restriction  $\chi_{\varepsilon}|_{{\mathcal{B}^n}^G}:{\mathcal{B}^n}^G\to {\mathcal B_0^n}^G$ is well defined and is $\varepsilon$-close to the identity map of $\mathcal{B}^n$ (recall that ${\mathcal{B}^n}^G$ denotes the $G$-fixed point set of $\mathcal B^n$).
By the same reason, $\chi_{\varepsilon}$  induces a continuous map
$\widetilde \chi_{\varepsilon}:\mathcal{B}^n/G\to {\mathcal B_0^n}/G$ defined in each $G(A)\in \mathcal{B}^n/G$ as
$$\widetilde\chi_{\varepsilon}\big(G(A)\big)=G\big(\chi_{\varepsilon}(A)\big).$$
Since $G$ acts isometrically in $\mathcal{B}^n$ (with respect to $d_H$), we can use formula~(\ref{metrica en cociente}) to define a compatible metric in $\mathcal{B}^n/G$. By inequality (\ref{desigualdad metrica invariante}), the induced map $\widetilde\chi_{\varepsilon}$ is $\varepsilon$-close to the identity map of $\mathcal{B}^n/G$.

All previous arguments imply  the following.

\begin{corollary}\label{c:zset} Let $G \subset O(n)$ be a closed subgroup that acts nontran-
sitively on $\mathbb S$. Then
\begin{enumerate}[\rm(1)]
\item The singleton $\{\mathbb B\}$ is a $Z$-set in the set of $G$-fixed points ${\mathcal{B}^n}^G.$
\item  The class of $\{\mathbb B\}$ is a $Z$-set in the $G$-orbit space $\mathcal{B}^n/G$.
\end{enumerate}

\end{corollary}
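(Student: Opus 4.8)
The plan is to derive both statements directly from the sufficient condition for a closed set to be a $Z$-set recorded in Section~\ref{preliminares}: namely, that if in a metric space $(X,d)$ the closed set $A$ admits, for every $\varepsilon>0$, a map $f\colon X\to X\setminus A$ with $d(x,f(x))<\varepsilon$, then $A$ is a $Z$-set. All the substantial work---the construction of the equivariant maps $\chi_\varepsilon$ pushing $\mathcal{B}^n$ off $\mathbb B$ and their induced versions on fixed-point sets and orbit spaces---has already been carried out above via \cite[Proposition 4.6]{AntJon}. Thus the corollary is essentially a matter of restricting and passing to quotients, while checking at each stage that the target of the relevant map is exactly the complement of the candidate $Z$-set.

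For part (1), I would first record the set-theoretic identity
$$
{\mathcal B_0^n}^G={\mathcal{B}^n}^G\setminus\{\mathbb B\}.
$$
This holds because $\mathbb B$ is a $G$-fixed point, so deleting it from $\mathcal{B}^n$ commutes with extracting the $G$-fixed point set. Consequently the restriction $\chi_\varepsilon|_{{\mathcal{B}^n}^G}$ is a map
$$
\chi_\varepsilon|_{{\mathcal{B}^n}^G}\colon {\mathcal{B}^n}^G\longrightarrow {\mathcal{B}^n}^G\setminus\{\mathbb B\}
$$
that is $\varepsilon$-close to the identity of the compact metric space ${\mathcal{B}^n}^G$ (the latter being closed in the compact set $\mathcal{B}^n$, hence a compact metric space). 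Since $\{\mathbb B\}$ is closed in ${\mathcal{B}^n}^G$, the sufficient condition applies and yields that $\{\mathbb B\}$ is a $Z$-set in ${\mathcal{B}^n}^G$.

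For part (2), the analogous identification is
$$
\mathcal{B}_0^n/G=(\mathcal{B}^n/G)\setminus\{G(\mathbb B)\},
$$
where $G(\mathbb B)=\{\mathbb B\}$ because $\mathbb B$ is $G$-fixed; thus deleting the single orbit $\{\mathbb B\}$ from $\mathcal{B}^n$ corresponds exactly to deleting its class from the quotient. Equipping $\mathcal{B}^n/G$ with the metric $d^*_H$ given by formula~(\ref{metrica en cociente}), the induced map $\widetilde\chi_\varepsilon$ becomes a map $\mathcal{B}^n/G\to (\mathcal{B}^n/G)\setminus\{G(\mathbb B)\}$ which, by inequality~(\ref{desigualdad metrica invariante}), is $\varepsilon$-close to the identity. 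As the class $G(\mathbb B)$ is a point of the metric space $\mathcal{B}^n/G$, it is closed, and the same sufficient condition shows it is a $Z$-set.

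The only genuinely delicate point is the verification of the two identifications above; both rest on the single observation that $\mathbb B$ is a $G$-fixed point (so its orbit is a singleton and it lies in every fixed-point set), which makes the deletion of $\{\mathbb B\}$ commute both with passing to $G$-fixed points and with passing to the $G$-orbit space. Once these are in hand, the $Z$-set conclusions are immediate consequences of the already-constructed families $\{\chi_\varepsilon\}_{\varepsilon>0}$ and $\{\widetilde\chi_\varepsilon\}_{\varepsilon>0}$.
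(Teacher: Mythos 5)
Your proposal is correct and follows essentially the same route as the paper: both rely on the equivariant maps $\chi_\varepsilon$ from \cite[Proposition 4.6]{AntJon}, restricted to the $G$-fixed point set and pushed down to the orbit space, together with the $\varepsilon$-close-to-identity criterion for $Z$-sets stated in the preliminaries. The set-theoretic identifications you single out are exactly what the paper uses implicitly, so nothing is missing.
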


The next step before proving the main theorem of this section, consists in proving that $\mathcal{B}^n$ is homeomorphic to the Hilbert cube. For that purpose let us consider the maps $f_\varepsilon, h_{\varepsilon}:M_0(n)\to M_0(n)$ from \cite[Proposition 4.10 and Proposition 4.11]{AntJon}. Both maps are $O(n)$-invariant and $\varepsilon$-close to the identity map of $M_0(n)$. Furthermore, their images have empty intersection since, for each $A\in M_0(n)$, the intersection of $f_\varepsilon(A)$ with $\mathbb S$ has empty interior in $\mathbb S$ while $h_\varepsilon(A)$ has not.
We will use these two functions in the proof of the following theorem.

\begin{theorem}\label{t:main properties Un}
Let $G\subset O(n)$ be a closed subgroup. Then
\begin{enumerate}[\rm(1)]
\item The $G$-orbit space ${\mathcal B_0^n}/G$  is a $Q$-manifold.
\item If $G$  acts non-transitively on $\mathbb S$, then
${\mathcal{B}^n}^G$ and ${\mathcal{B}^n}/G$  are homeomorphic to the Hilbert cube $Q$.
\item In particular, if $G$ is the trivial group, $\mathcal{B}^n$ is homeomorphic to $Q$.
\end{enumerate}
\end{theorem}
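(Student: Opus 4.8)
The plan is to reduce all three assertions to Toru\'nczyk's characterizations of the Hilbert cube and of $Q$-manifolds: a locally compact, separable, metrizable ANR is a $Q$-manifold precisely when it has the disjoint cells property (DDP), and a compact AR is homeomorphic to $Q$ precisely when it has the DDP (see \cite{Torunczyk}). Thus for each of the spaces appearing in the statement I only have to verify an appropriate ANR condition and the DDP, following exactly the blueprint used for $M(n)$ in \cite{AntJon}. The three inputs will be Lemma~\ref{l:On AR}, Corollary~\ref{c:zset}, and the maps $f_\varepsilon,h_\varepsilon$.

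First I would dispose of the ANR bookkeeping. By Lemma~\ref{l:On AR}, $\mathcal{B}^n$ is an $O(n)$-AR, hence a $G$-AR for every closed subgroup $G\subseteq O(n)$, and it is compact by Theorem~\ref{t:propiedades basicas de U(n)}(4). Standard facts on compact transformation groups (\cite{Antonyan 1980,Antonyan 2005}) then yield that $\mathcal{B}^n/G$ and ${\mathcal{B}^n}^G$ are compact ANRs; both are in fact ARs, since the strict contraction $H(A,t)=(1-t)A+t\mathbb{B}$ of Lemma~\ref{l:strictly On contractible} descends to $\mathcal{B}^n/G$ and restricts to ${\mathcal{B}^n}^G$ (because $\mathbb{B}$ is $G$-fixed and $G$ acts linearly), so both spaces are contractible. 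Finally $\mathcal{B}_0^n=\mathcal{B}^n\setminus\{\mathbb{B}\}$ is a $G$-invariant open subset of a $G$-AR, hence a $G$-ANR, whence $\mathcal{B}_0^n/G$ is an ANR; being open in the compact space $\mathcal{B}^n/G$, it is also locally compact.

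The heart of the matter is the DDP. For (1), the $O(n)$-invariant maps $f_\varepsilon,h_\varepsilon$ descend to selfmaps of $\mathcal{B}_0^n/G$ that, by inequality~(\ref{desigualdad metrica invariante}), are $\varepsilon$-close to the identity; composing two given maps of cubes with these two selfmaps produces $\varepsilon$-approximations with disjoint images, because the property distinguishing the image of $f_\varepsilon$ from that of $h_\varepsilon$ (emptiness of the interior of the trace on $\mathbb{S}$) is $O(n)$-invariant and therefore survives the identification of $G$-orbits. Toru\'nczyk's theorem then gives that $\mathcal{B}_0^n/G$ is a $Q$-manifold. For (2) I would upgrade to $Q$ by means of $Z$-sets: since $\{[\mathbb{B}]\}$ is a $Z$-set in $\mathcal{B}^n/G$ (Corollary~\ref{c:zset}(2)), every map of a cube into $\mathcal{B}^n/G$ can first be pushed $\varepsilon$-close into $\mathcal{B}_0^n/G$, where the DDP just established applies; hence $\mathcal{B}^n/G$ is a compact AR with the DDP, so $\mathcal{B}^n/G\cong Q$. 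The fixed point set is treated identically, using that $\{\mathbb{B}\}$ is a $Z$-set in ${\mathcal{B}^n}^G$ (Corollary~\ref{c:zset}(1)) together with the restrictions of $f_\varepsilon,h_\varepsilon$ to ${\mathcal B_0^n}^G$, which again have disjoint images; thus ${\mathcal{B}^n}^G$ is a compact AR with the DDP and ${\mathcal{B}^n}^G\cong Q$. Statement (3) is the special case $G=\{e\}$, which acts non-transitively on $\mathbb{S}$ because $n\geq2$, so that $\mathcal{B}^n={\mathcal{B}^n}^{\{e\}}\cong Q$.

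The main obstacle is the DDP, and within it the one genuinely geometric ingredient: the disjointness of the images of $f_\varepsilon$ and $h_\varepsilon$. Everything downstream is formal, but I must take care that this disjointness is preserved by the two operations that follow, namely passing to the $G$-orbit space and restricting to the $G$-fixed point set. For the orbit space it is preserved exactly because the distinguishing sphere-trace property is $O(n)$-invariant; for the fixed point set the analogous point to confirm (against \cite{AntJon}) is that $f_\varepsilon$ and $h_\varepsilon$ are compatible with ${\mathcal B_0^n}^G$.
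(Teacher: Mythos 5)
Your proposal follows essentially the same route as the paper: the same ANR/AR bookkeeping from Lemma~\ref{l:On AR} and the contraction of Lemma~\ref{l:strictly On contractible}, the same use of the descended and restricted maps $f_\varepsilon,h_\varepsilon$ to verify Toru\'nczyk's criterion for $\mathcal B_0^n/G$ and ${\mathcal B_0^n}^G$, and the same $Z$-set absorption via Corollary~\ref{c:zset} to pass to $\mathcal{B}^n/G$ and ${\mathcal{B}^n}^G$ (the paper packages this last step as ``complement of a $Z$-set is a $Q$-manifold'' plus ``compact AR $Q$-manifold $\cong Q$'', which is your argument in citation form). The point you flag at the end --- that restricting $f_\varepsilon,h_\varepsilon$ to the $G$-fixed point set requires their equivariance --- is exactly the compatibility the paper relies on, so the proposal is correct.
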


\begin{proof}
Let us consider the Hausdorff metric $d_H$ in $\mathcal{B}^n$ and the induced metric $d_H^*$ in $\mathcal{B}^n/G$ (c.f. formula \ref{metrica en cociente}). Since ${\mathcal{B}^n}^G$ is a closed subset of $\mathcal{B}^n$ and $\mathcal{B}^n/G$ is a continuous image of $\mathcal{B}^n$, it follows from the compactness of $\mathcal{B}^n$ that  ${\mathcal{B}^n}^G$ and $\mathcal{B}^n/G$ are both compact metric spaces.  On the other hand, $\mathcal B_0^n/G$ is an open subset of the separable and compact space $\mathcal{B}^n/G$ and thus $\mathcal B_0^n/G$ is separable and locally compact.

By Lemma~\ref{l:On AR}, $\mathcal{B}^n$ is an $O(n)$-$\rm{AR}$ which implies that ${\mathcal B_0^n}$ is an $O(n)$-$\rm{ANR}$. Then  $\mathcal{B}^n$ is a $G$-$\rm{AR}$ and  ${\mathcal B_0^n}$ is a $G$-$\rm{ANR}$  (see, e.g., \cite{Vries}). Since $G$ is a Lie group, we conclude that $\mathcal{B}^n/G$ is an $\rm{AR}$, while  ${\mathcal B_0^n}/G$ is an $\rm{ANR}$ (see
\cite{Antonyan 88}).

(1) According to Toru\'nczyk's Characterization Theorem (see \cite[Theorem 1]{Torunczyk}), to prove that ${\mathcal B_0^n}/G$ is a $Q$-manifold it is enough to find, for every $\varepsilon>0$, continuous functions $ \widetilde f_\varepsilon, \widetilde h_\varepsilon:{\mathcal B_0^n}/G\to {\mathcal B_0^n}/G$ with disjoint images and $\varepsilon$-close to the identity map of ${\mathcal B_0^n}/G$. For that purpose, just define $\widetilde f_\varepsilon$ and $\widetilde h_\varepsilon$ by the rule:
$$\widetilde f_\varepsilon\big(G(A)\big)=G\big(f_\varepsilon(A)\big)\quad\text{ and }\quad\widetilde h_\varepsilon\big(G(A)\big)=G\big(h_\varepsilon(A)\big).$$
 These two maps are well defined, continuous and satisfy the required condition. Therefore, ${\mathcal B_0^n}/G$ is a $Q$-manifold.

(2) Observe that the class of $\{\mathbb B\}$ in $\mathcal{B}^n/G$ is a $Z$-set and coincides with the complement of ${\mathcal B_0^n}/G$ in $\mathcal{B}^n/G$. Since ${\mathcal B_0^n}/G$ is a $Q$-manifold, we conclude from \cite[\S 3]{Torunczyk} that $\mathcal{B}^n/G$ is a $Q$-manifold too.  Let us also observe that $\mathcal{B}^n/G$ is a compact $\rm{AR}$ and therefore, by \cite[Theorem 7.5.8]{Van Mill}, we infer that $\mathcal{B}^n/G$ is homeomorphic to the Hilbert cube.

Now let us prove that  ${\mathcal{B}^n}^G$ is homeomorphic to the Hilbert cube. Since $\mathcal{B}^n$ is an $O(n)$-$\rm{AR}$ it follows from \cite[Theorem 3.7]{Antonyan 2005-b} that ${\mathcal{B}^n}^G$ is an  $\rm{AR}$.

Next, for every $\varepsilon>0$, let us consider the restrictions $\phi_\varepsilon=f_{\varepsilon}|_{{\mathcal B_0^n}^G}$ and $\eta_\varepsilon=h_\varepsilon|_{{\mathcal B_0^n}^G}$ of the maps $f_\varepsilon$ and $h_\varepsilon$ defined above (c.f. \cite{AntJon}). Thus $\phi_\varepsilon$ and $\eta_\varepsilon$ are well defined continuous maps with disjoint images and both of them are $\varepsilon$-close to the identity map of ${\mathcal B_0^n}^G$. Thus, using Toru\'nczyk's Characterization Theorem  (\cite[Theorem 1]{Torunczyk}) again, we conclude that ${\mathcal B_0^n}^G$ is a $Q$-manifold. By Corollary~\ref{c:zset}, $\{\mathbb B\}$ is a $Z$-set in ${\mathcal{B}^n}^G$ and thus ${\mathcal{B}^n}^G$ is a $Q$-manifold too (\cite[\S 3]{Torunczyk}). Since ${\mathcal{B}^n}^G$ is a compact $\rm{AR}$  we infer from \cite[Theorem 7.5.8]{Van Mill} that ${\mathcal{B}^n}^G$ is homeomorphic to the Hilbert cube, as desired.

Part (3) of the theorem is an obvious consequence of part (2).
\end{proof}

Now we can bring the results of this section all together in order to conclude the following.
\begin{corollary}\label{c:properties of Un}

$\mathcal{B}^n$ is a Hilbert cube endowed with an  $O(n)$-action satisfying the following properties:
\begin{enumerate}[\rm(1)]
\item $\mathcal{B}^n$ is an $O(n)$-$\mathrm{AR}$ with a unique $O(n)$-fixed point, $\mathbb B$,
\item $\mathcal{B}^n$ is strictly $O(n)$-contractible to $\mathbb B$,
\item For a closed subgroup $G\subset O(n)$, the set ${\mathcal{B}^n}^G$ equals the singleton $\{\mathbb B\}$ if and only if $G$ acts transitively  on the unit sphere $\mathbb S$, and ${\mathcal{B}^n}^{G}$ is homeomorphic to the Hilbert cube whenever ${\mathcal{B}^n}^{G}\neq\{\mathbb B\},$
\item For any closed subgroup $G\subset O(n)$,  the $G$-orbit space ${\mathcal B_0^n}/G$ is a $Q$-manifold.
\end{enumerate}
\end{corollary}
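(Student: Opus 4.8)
The plan is to assemble the corollary almost entirely from the results already established in this section, treating each of the four assertions as a consequence of a previously proven lemma or theorem and supplying only one short geometric argument that has not yet appeared explicitly. To begin, the overarching claim that $\mathcal{B}^n$ is a Hilbert cube is precisely Theorem~\ref{t:main properties Un}-(3), obtained by taking $G$ to be the trivial group.

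For part (1), the $O(n)$-$\mathrm{AR}$ property is exactly the content of Lemma~\ref{l:On AR}, while the uniqueness of $\mathbb B$ as the $O(n)$-fixed point is the first assertion of Lemma~\ref{l:strictly On contractible}. Part (2) is the second assertion of that same lemma, so nothing further is needed. Part (4), stating that ${\mathcal B_0^n}/G$ is a $Q$-manifold for every closed $G\subset O(n)$, is verbatim Theorem~\ref{t:main properties Un}-(1).

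The only assertion requiring a genuine argument is part (3). The implication ``$G$ non-transitive $\Rightarrow {\mathcal{B}^n}^G\neq\{\mathbb B\}$ and ${\mathcal{B}^n}^G\cong Q$'' is immediate from Theorem~\ref{t:main properties Un}-(2), which asserts that ${\mathcal{B}^n}^G$ is homeomorphic to $Q$ (hence not a singleton) whenever $G$ acts non-transitively. For the converse I would argue directly: suppose $G$ acts transitively on $\mathbb S$ and take any $A\in{\mathcal{B}^n}^G$. Since $A\in\mathcal{B}^n$ we have $C(A)=\mathbb B$, which forces $A\cap\mathbb S\neq\emptyset$; and $A\cap\mathbb S$ is $G$-invariant because $A$ is. Transitivity then gives $A\cap\mathbb S=\mathbb S$, so $\mathbb S\subset A\subset\mathbb B$, and convexity of $A$ yields $A=\mathbb B$. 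Combining the two implications gives the ``if and only if'', and the clause ``homeomorphic to $Q$ whenever ${\mathcal{B}^n}^G\neq\{\mathbb B\}$'' follows because, by the equivalence just proved, ${\mathcal{B}^n}^G\neq\{\mathbb B\}$ holds exactly when $G$ is non-transitive, so Theorem~\ref{t:main properties Un}-(2) again applies.

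The main (and essentially only) obstacle is the elementary claim that $A\in\mathcal{B}^n$ forces $A\cap\mathbb S\neq\emptyset$: this is where the hypothesis $C(A)=\mathbb B$ must be used, since if $A$ avoided the sphere then its compactness would give $\max_{a\in A}\|a\|<1$, and a concentric ball of that smaller radius would still contain $A$, contradicting the minimality of the circumball. Everything else is bookkeeping of citations.
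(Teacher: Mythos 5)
Your assembly of the corollary matches the paper exactly: the paper offers no separate proof, simply collecting Lemma~\ref{l:strictly On contractible}, Lemma~\ref{l:On AR} and Theorem~\ref{t:main properties Un} in precisely the way you describe. Your direct argument that transitivity of $G$ on $\mathbb S$ forces ${\mathcal{B}^n}^G=\{\mathbb B\}$ (namely that $A\cap\mathbb S$ is nonempty by minimality of the circumball and is $G$-invariant, hence equals $\mathbb S$, hence $A\supset\conv(\mathbb S)=\mathbb B$) is correct and in fact supplies the one step the paper leaves implicit, deferring instead to the analogous property of the larger hyperspace $M(n)$ established in \cite{AntJon}.
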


Finally, we can combine corollary~\ref{c:properties of Un} with \cite[Theorem 3.3]{Antonyan 2007} to obtain the main result of this section.

\begin{theorem}
 $\mathcal{B}^n/O(n)$ is homeomorphic to the Banach-Mazur compactum $BM(n)$.
\end{theorem}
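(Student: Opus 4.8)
The plan is to obtain the statement as a direct application of the abstract characterization of the Banach--Mazur compactum contained in \cite[Theorem 3.3]{Antonyan 2007}. That theorem gives an intrinsic list of conditions on an $O(n)$-space $X$ under which the orbit space $X/O(n)$ is homeomorphic to $BM(n)$. Informally, $X$ must be a Hilbert cube carrying an $O(n)$-action such that: (a) $X$ is an $O(n)$-$\mathrm{AR}$ with a single $O(n)$-fixed point; (b) $X$ is strictly $O(n)$-contractible to that fixed point; (c) for every closed subgroup $G\subset O(n)$, the fixed-point set ${X}^G$ reduces to the fixed point precisely when $G$ acts transitively on $\mathbb S$ and is otherwise homeomorphic to $Q$; and (d) for every such $G$, the orbit space of $X$ minus the fixed point, modulo $G$, is a $Q$-manifold. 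Thus the whole argument reduces to presenting $\mathcal{B}^n$, equipped with its natural $O(n)$-action, as a model fulfilling (a)--(d).

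Concretely, I would take $X=\mathcal{B}^n$ and verify the hypotheses by reading them off Corollary~\ref{c:properties of Un}, whose preamble already records that $\mathcal{B}^n$ is a Hilbert cube carrying an $O(n)$-action (the Hilbert cube assertion being Theorem~\ref{t:main properties Un}(3)). Then I would match the conditions one at a time: item~(1) of the corollary supplies (a), item~(2) supplies (b), item~(3) supplies (c) --- including the required dichotomy between the transitive case, where ${\mathcal{B}^n}^G=\{\mathbb B\}$, and the nontransitive case, where ${\mathcal{B}^n}^G$ is again a Hilbert cube --- and item~(4) supplies (d). With all four hypotheses confirmed, \cite[Theorem 3.3]{Antonyan 2007} applies verbatim and yields a homeomorphism between $\mathcal{B}^n/O(n)$ and $BM(n)$, which is exactly the assertion.

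Since all of the genuine content has already been assembled in Corollary~\ref{c:properties of Un}, this concluding step is essentially bookkeeping, and the only points demanding care are formal ones: confirming that \cite[Theorem 3.3]{Antonyan 2007} quantifies over exactly the family of closed subgroups $G\subset O(n)$ treated in the corollary, and that the notions of $O(n)$-$\mathrm{AR}$, strict $O(n)$-contractibility, and $Q$-manifold are used there in the same sense as in the characterization. The true obstacle, properly speaking, lay upstream --- in establishing via Toru\'nczyk's characterization that $\mathcal{B}^n$ and its fixed-point sets ${\mathcal{B}^n}^G$ are Hilbert cubes and that the relevant orbit spaces are $Q$-manifolds (Theorem~\ref{t:main properties Un}) --- rather than in the final invocation of the external theorem.
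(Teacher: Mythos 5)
Your proposal is correct and follows exactly the paper's own argument: the paper also proves this theorem by combining Corollary~\ref{c:properties of Un} with \cite[Theorem 3.3]{Antonyan 2007}, with all the real work done upstream in Theorem~\ref{t:main properties Un} and the surrounding lemmas. The only difference is that you spell out the hypothesis-matching explicitly, which the paper leaves implicit.
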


Since $\mathcal{B}^n/O(n)$ is homeomorphic to $\mathcal K^n_*/\Sim(n)$ (according to Proposition~\ref{p:KyU son homeo}), we also have

\begin{corollary}
 $\mathcal K_*^n/\Sim(n)$ is homeomorphic to the Banach-Mazur compactum $BM(n)$.
\end{corollary}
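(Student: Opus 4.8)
The plan is to obtain this corollary purely by composition, since both ingredients are already in hand. By Proposition~\ref{p:KyU son homeo} the orbit space $\mathcal K_*^n/\Sim(n)$ is homeomorphic to $\mathcal{B}^n/O(n)$; explicitly, the map $\varphi:\mathcal{B}^n/O(n)\to\mathcal K_*^n/\Sim(n)$ induced by the restriction of the orbit map to $\mathcal{B}^n$ is a homeomorphism, with inverse $\psi$ built there from the retraction $\varrho$ via the Transgression Theorem. On the other hand, the theorem immediately preceding this corollary asserts that $\mathcal{B}^n/O(n)$ is homeomorphic to the Banach-Mazur compactum $BM(n)$; call that homeomorphism $\Phi:\mathcal{B}^n/O(n)\to BM(n)$.

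First I would simply form the composite $\Phi\circ\varphi^{-1}:\mathcal K_*^n/\Sim(n)\to BM(n)$, equivalently $\Phi\circ\psi$. Being a composition of homeomorphisms, this map is itself a homeomorphism, which is exactly the assertion of the corollary. There is nothing further to verify: the relation of being homeomorphic is transitive, and both homeomorphisms being composed were established earlier in the excerpt.

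I do not expect any genuine obstacle here, because all the substantive work has already been carried out. The topological content, namely that $\mathcal{B}^n$ is a Hilbert cube on which $O(n)$ acts with the properties required by \cite[Theorem 3.3]{Antonyan 2007}, and hence that $\mathcal{B}^n/O(n)$ is homeomorphic to $BM(n)$, was the subject of the preceding theorem; while the reduction of the $\Sim(n)$-action on $\mathcal K_*^n$ to the $O(n)$-action on the slice $\mathcal{B}^n$ was the subject of Proposition~\ref{p:KyU son homeo}. The only point worth stating explicitly is that these two previously established homeomorphisms share the middle space $\mathcal{B}^n/O(n)$, so that they may be concatenated; once that is noted the corollary follows immediately.
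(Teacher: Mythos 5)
Your proposal is correct and is exactly the paper's argument: the corollary is stated there as an immediate consequence of composing the homeomorphism of Proposition~\ref{p:KyU son homeo} between $\mathcal K_*^n/\Sim(n)$ and $\mathcal{B}^n/O(n)$ with the preceding theorem identifying $\mathcal{B}^n/O(n)$ with $BM(n)$. Nothing is missing.
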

Finally, if we combine Theorem~\ref{t:main properties Un} with Proposition~\ref{p:KyU son homeo} we get the topological structure of $\mathcal K^n_*$.
\begin{corollary}
The hyperspace $\mathcal K^n_*$ is  a contractible $Q$-manifold homeomorphic to $Q\times \mathbb R^{n+1}$.
\end{corollary}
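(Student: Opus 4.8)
The plan is simply to assemble the two structural results already proved in this section. First I would invoke Proposition~\ref{p:Kigual Upor R}, which provides a homeomorphism $\mathcal K^n_*\cong\mathcal{B}^n\times\mathbb R^{n+1}$. Next, Theorem~\ref{t:main properties Un}(3) (the case in which $G$ is the trivial group) asserts that $\mathcal{B}^n$ is homeomorphic to the Hilbert cube $Q$. Composing these two homeomorphisms yields
$$\mathcal K^n_*\cong\mathcal{B}^n\times\mathbb R^{n+1}\cong Q\times\mathbb R^{n+1},$$
which is the required homeomorphism.

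It then remains to verify the two adjectives in the statement. Contractibility is immediate: both $Q$ and $\mathbb R^{n+1}$ are contractible (each being convex), so their product is contractible, and hence so is the homeomorphic space $\mathcal K^n_*$. Alternatively one could note that $\mathcal{B}^n$ is $O(n)$-contractible by Lemma~\ref{l:strictly On contractible}, but the product argument is the most economical.

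For the $Q$-manifold claim I would exhibit $Q\times\mathbb R^{n+1}$ as an open subset of a Hilbert cube. Using $\mathbb R^{n+1}\cong(0,1)^{n+1}$ we obtain $Q\times\mathbb R^{n+1}\cong Q\times(0,1)^{n+1}$. Since $Q=[0,1]^\infty$ absorbs finitely many extra factors, that is, $Q\cong Q\times[0,1]^{n+1}$, the space $Q\times(0,1)^{n+1}$ is an open subset of $Q\times[0,1]^{n+1}\cong Q$. Therefore $\mathcal K^n_*$ is homeomorphic to an open subset of $Q$, which is exactly the definition of a $Q$-manifold.

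The proof is essentially bookkeeping of previously established facts, so there is no genuine obstacle. The only step requiring a moment's care is the last one: recognizing the product with an open Euclidean factor as an open subset of $Q$, via the absorption $Q\cong Q\times[0,1]^{n+1}$ together with $\mathbb R^{n+1}\cong(0,1)^{n+1}$.
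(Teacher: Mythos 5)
Your proposal is correct and follows essentially the same route the paper intends: combine the homeomorphism $\mathcal K^n_*\cong\mathcal{B}^n\times\mathbb R^{n+1}$ of Proposition~\ref{p:Kigual Upor R} with $\mathcal{B}^n\cong Q$ from Theorem~\ref{t:main properties Un}(3), the paper leaving the contractibility and $Q$-manifold verifications implicit. Your explicit check that $Q\times\mathbb R^{n+1}\cong Q\times(0,1)^{n+1}$ sits as an open subset of $Q\times[0,1]^{n+1}\cong Q$ is a correct and welcome filling-in of those details.
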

As we mentioned in the introduction, this last corollary answers a particular case of Question~7.15 of \cite{AntJon}.

\section{Upper bounds of $\odot(\,\cdot\,,\,\cdot\,)$}\label{s:applications}

We start this section by collecting some known results. The first of them (also known as Jung's Theorem)
was shown in \cite{Ju}, whereas the second was proved in \cite[pg. 59]{BoFe}.

\begin{theorem}\label{p:Jung}
Let $K\in\mathcal K^n$. Then
\begin{equation}\label{desigualdad circumradio diametro}
\sqrt{\frac{2(n+1)}{n}}R(K)\leq D(K).
\end{equation}

Equality holds iff $K$ contains a regular simplex of edge
length $D(K)=\sqrt{\frac{2(n+1)}{n}}R(K)$.
\end{theorem}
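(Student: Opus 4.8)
The plan is to reduce the inequality to a statement about the finitely many points where $K$ touches its circumsphere, and then to a convexity estimate. Write $R=R(K)$ and $D=D(K)$, let $c=\check{c}(K)$ be the Chebyshev point, so that $C(K)=c+R\mathbb B$ is the circumball and $K\subset c+R\mathbb B$. Let $T=K\cap\partial(c+R\mathbb B)$ be the set of contact points, each lying at distance exactly $R$ from $c$. The cornerstone of the argument is the classical characterization of the minimal enclosing ball: the center $c$ lies in $\conv(T)$. Granting this, Carath\'eodory's theorem yields points $p_0,\dots,p_k\in T$ with $k\le n$ and weights $\lambda_0,\dots,\lambda_k\ge 0$, $\sum_i\lambda_i=1$, such that $c=\sum_{i=0}^k\lambda_i p_i$.

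The second step is a short computation with the weighted sum of squared edge lengths. Since $\|p_i-c\|=R$ for every $i$ and $\sum_i\lambda_i(p_i-c)=0$, expanding $\|p_i-p_j\|^2=2R^2-2(p_i-c)^{\top}(p_j-c)$ and summing against the weights gives
\begin{equation*}
\sum_{i,j}\lambda_i\lambda_j\|p_i-p_j\|^2
=2R^2-2\Big\|\sum_i\lambda_i(p_i-c)\Big\|^2=2R^2.
\end{equation*}
Because the diagonal terms vanish and each off-diagonal distance satisfies $\|p_i-p_j\|\le D$, this yields
\begin{equation*}
2R^2=\sum_{i\ne j}\lambda_i\lambda_j\|p_i-p_j\|^2\le D^2\sum_{i\ne j}\lambda_i\lambda_j=D^2\Big(1-\sum_i\lambda_i^2\Big).
\end{equation*}

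The final step bounds $\sum_i\lambda_i^2$ from below. With at most $n+1$ nonzero weights summing to $1$, the Cauchy--Schwarz inequality gives $\sum_i\lambda_i^2\ge\frac{1}{n+1}$, hence $1-\sum_i\lambda_i^2\le\frac{n}{n+1}$ and therefore $2R^2\le\frac{n}{n+1}D^2$, which is exactly \eqref{desigualdad circumradio diametro}. For the equality case, both estimates must be tight simultaneously: $\sum_i\lambda_i^2=\frac{1}{n+1}$ forces exactly $n+1$ contact points with equal weights $\lambda_i=\frac{1}{n+1}$, while $\|p_i-p_j\|=D$ for all $i\ne j$ forces these points to be pairwise at distance $D$; they then form a regular $n$-simplex inscribed in $\partial(c+R\mathbb B)$ of edge length $D$, contained in $K$ by convexity. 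Conversely, if $K$ contains a regular simplex $S$ of edge length $D$, then its circumradius satisfies $\sqrt{2(n+1)/n}\,R(S)=D$, and the monotonicity $R(K)\ge R(S)$ (the circumball of $K$ contains $S$) combined with the inequality already proved forces $\sqrt{2(n+1)/n}\,R(K)=D$, closing the equivalence.

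The main obstacle is the first step, namely verifying that $c\in\conv(T)$; this is where minimality of the circumball is genuinely used. If $c$ were separated from $\conv(T)$ by a hyperplane, one could translate the center slightly toward $\conv(T)$ and strictly decrease the enclosing radius, contradicting the definition of $R(K)$. Making this perturbation argument rigorous (controlling the contact set under small translations of the center, or equivalently invoking a separation/Helly argument) is the only non-elementary ingredient; the remaining steps are the elementary identity and the Cauchy--Schwarz bound above.
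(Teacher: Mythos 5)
Your proof is correct. Note that the paper does not actually prove this theorem: it is classical (Jung's inequality) and is only quoted with a citation to Jung's 1901 paper, so there is no in-paper argument to compare against. What you have written is the standard modern proof: place the circumcenter $c$ in the convex hull of the contact set $T=K\cap\partial C(K)$, extract at most $n+1$ points by Carath\'eodory, use the identity $\sum_{i,j}\lambda_i\lambda_j\|p_i-p_j\|^2=2R^2$ together with $\sum_i\lambda_i^2\ge\frac{1}{n+1}$, and read off the equality case. All steps check out, including the converse direction via monotonicity of the circumradius. The one ingredient you flag as non-elementary --- that $c\in\conv(T)$ --- is precisely the content of Proposition~\ref{p:charCirInr}(1) of this paper (quoted from Bonnesen--Fenchel), so within the paper's own framework you could simply invoke that proposition rather than redo the perturbation argument; your sketch of that perturbation (separate $c$ from $\conv(T)$, translate the center toward the separating half-space, and use compactness to control the points of $K$ far from $T$) is the standard way to prove it and is sound. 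One cosmetic remark: in the equality analysis you should note explicitly that equality in $\sum_i\lambda_i^2\ge\frac{1}{n+1}$ forces all $n+1$ weights to be strictly positive, which is what licenses concluding $\|p_i-p_j\|=D$ for \emph{every} pair $i\ne j$; you implicitly use this but it is worth one clause.
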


\begin{proposition}\label{p:charCirInr}
Let $K\in\mathcal K^n$, $c\in K$, $t\geq 0$, with $c+t\B\subset K\subset\B$. Then:
\begin{enumerate}[\rm(1)]
\item $R(K)=1$ iff
there exist $k\in\{2,\dots,n+1\}$ and $p^1,\dots,p^k\in\partial K\cap \s$ such that
$0\in \conv(\{p^1,\dots,p^k\})$.
\item $r(K)=t$ iff
there exists $k\in\{2,\dots,n+1\}$ and $u^1,\dots,u^k\in\s$ such that the
hyperplanes $H_i=c+\{x:(u^i)^{\top}x=t\}$ support $c+t\B$ and $K$ in
$c+tu^i$, $i=1,\dots,k$ and $0\in\conv(\{u^1,\dots,u^k\})$.
\end{enumerate}
\end{proposition}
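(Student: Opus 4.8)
The plan is to read (1) and (2) as dual first-order optimality conditions: (1) says that $\B$ is the \emph{minimal} enclosing ball of $K$ exactly when the contact points $\partial K\cap\s$ surround the origin, while (2) says that $c+t\B$ is a \emph{maximal} inscribed ball exactly when the outer normals at its contact points surround the origin. In each case the two $k$-bounds are uniform and soft: $k\le n+1$ follows from Carath\'eodory's theorem applied to the relation $0\in\conv(\cdots)$ in $\R^n$, and $k\ge 2$ holds because the relevant points (respectively normals) lie on $\s$, so no single one of them can have $0$ in its convex hull. The two ``if'' implications will be short contradiction arguments built on the identity $0=\sum_i\lambda_i x_i$, while the two ``only if'' implications are separation-plus-perturbation arguments, and that is where the real work lies.

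For (1), note first that $K\subset\B$ forces $R(K)\le 1$, and if $R(K)=1$ then by uniqueness of $C(K)$ we get $C(K)=\B$ and $\check c(K)=0$. For the ``if'' direction I would assume $0=\sum_i\lambda_i p^i$ with $p^i\in\partial K\cap\s$, $\lambda_i\ge 0$, $\sum_i\lambda_i=1$, and suppose toward a contradiction that $R(K)<1$ with circumcenter $x_0$, so $\|p^i-x_0\|\le R(K)<1$. Expanding $1=\|p^i\|^2=\|p^i-x_0\|^2+2(p^i-x_0)^\top x_0+\|x_0\|^2$ and taking the $\lambda_i$-weighted sum, the cross term collapses via $\sum_i\lambda_i(p^i-x_0)=-x_0$, giving $\sum_i\lambda_i\|p^i-x_0\|^2=1+\|x_0\|^2\ge 1$, which contradicts $\|p^i-x_0\|^2<1$. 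For the ``only if'' direction I would assume $R(K)=1$ and $0\notin\conv(\partial K\cap\s)$; strictly separating the origin from this compact convex set yields $v\in\s$ and $\delta>0$ with $p^\top v\ge\delta$ for every $p\in\partial K\cap\s$. Splitting $K$ into the open piece $\{x\in K:x^\top v>\delta/2\}$ (which contains every sphere point) and its compact complement (on which $\|x\|\le\gamma<1$ by compactness), one checks that for small $\epsilon>0$ the ball centered at $\epsilon v$ of radius $\max\{\sqrt{1-\epsilon\delta+\epsilon^2},\,\gamma+\epsilon\}<1$ still contains $K$, contradicting $R(K)=1$.

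For (2) I would pass to the support function and use $d(x,\partial K)=\min_{u\in\s}\big(h_K(u)-u^\top x\big)$ for $x\in\Int K$, so that $r(K)=\max_x d(x,\partial K)$ and the contact directions $T=\{u\in\s:h_K(u)-u^\top c=t\}=\{u\in\s:c+tu\in\partial K\}$ are exactly the active normals. A useful preliminary observation is that the common-supporting-hyperplane hypothesis is automatic: if $w^\top x\le\alpha$ supports $K$ at a contact point $p=c+tu$, it also supports the sub-ball $c+t\B\subset K$ at $p$, and a ball has a unique supporting hyperplane at each boundary point, forcing $w=u$ and $H=\{x:u^\top(x-c)=t\}$. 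The ``if'' direction is then immediate: from $0=\sum_i\lambda_i u^i$ and any inscribed ball $c'+s\B\subset K$, the support condition $(u^i)^\top x\le(u^i)^\top c+t$ applied at $c'+su^i$ gives $(u^i)^\top(c'-c)\le t-s$, and summing against $\lambda_i$ forces $0\le t-s$, i.e. $s\le t$, so $r(K)=t$. For the ``only if'' direction I would assume $r(K)=t$ attained at $c$ with $0\notin\conv(T)$; separating yields $v\in\s$, $\delta>0$ with $u^\top v\le-\delta$ on $T$, and I would estimate $\min_{u\in\s}\big(g(u)-\epsilon\,u^\top v\big)$ where $g(u)=h_K(u)-u^\top c\ge t$ with equality exactly on $T$. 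Splitting $\s$ into a neighborhood of $T$ (where $-\epsilon u^\top v\ge \epsilon\delta/2$ dominates) and its complement (where $g\ge t+\eta$ with slack), this minimum exceeds $t$ for small $\epsilon$, so $c+\epsilon v$ admits an inscribed ball of radius $>t$, contradicting $r(K)=t$.

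The main obstacle in both parts is the same uniform perturbation estimate in the ``only if'' directions: after separating, one must show the competing ball improves over \emph{all} directions (or points) simultaneously, not merely at the contact set. I expect to handle this by the compactness split indicated above, isolating a neighborhood of the contact set, where the first-order gain of order $\epsilon\delta$ dominates, from its complement, where a strict slack $\eta>0$ already exists and survives an $O(\epsilon)$ perturbation. Once this is in place, Carath\'eodory reduces the possibly infinite contact set to at most $n+1$ elements and $0\notin\s$ forces at least $2$, completing both characterizations; the degenerate case $t=0$ in (2) can be treated directly by placing $K$ in a hyperplane through $c$ and taking the two opposite unit normals $\pm u$, for which $0=\tfrac12 u+\tfrac12(-u)$.
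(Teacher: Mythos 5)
The paper does not actually prove this proposition: it is stated as a collected known result and attributed to Bonnesen--Fenchel \cite[pg.~59]{BoFe}, so there is no in-paper argument to compare yours against. Your proof is the standard first-order optimality characterization of the circumball and the inball, and it is correct and essentially self-contained. In (1), the weighted identity $\sum_i\lambda_i\|p^i-x_0\|^2=1+\|x_0\|^2$ obtained from $\sum_i\lambda_i(p^i-x_0)=-x_0$ is exactly the classical contradiction for the ``if'' direction, and your two-piece perturbation for the ``only if'' direction (points of $K\cap\s$ gain $\epsilon\delta$ after recentering at $\epsilon v$, while the complementary compact piece already has slack $1-\gamma$) is sound. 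In (2), recasting everything through $\min_{u\in\s}\bigl(h_K(u)-u^{\top}c'\bigr)$ makes the ``if'' direction a one-line averaging argument and reduces the ``only if'' direction to the same neighborhood-of-the-contact-set versus complement split; compactness of $T=\{u\in\s: h_K(u)-u^{\top}c=t\}$ justifies the strict separation and the uniform slack $\eta$, and Carath\'eodory together with $0\notin\s$ gives $2\le k\le n+1$ in both parts. This is a genuine gain over the paper, which leaves the reader to chase the reference.

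One small caveat, which is really an ambiguity of the statement rather than a gap in your argument: in the ``if'' direction of (2) you use $(u^i)^{\top}(x-c)\le t$ for all $x\in K$, which for $t>0$ is forced because the full-dimensional ball $c+t\B$ pins down the side of $H_i$ on which $K$ lies, but for $t=0$ the degenerate ball $\{c\}$ does not. Under the literal (unoriented) reading of ``supports'', the $t=0$ case of (2) is actually false (take $K=\B\cap\{x: x_1\le 0\}$, $c=0$, $u^{1,2}=\pm e^1$); under the natural reading in which $u^i$ is the outer normal of $K$ at $c+tu^i$, your inequality holds for all $t\ge 0$ and your argument, including the degenerate case you flag at the end, is complete. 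Since the paper only invokes (2) with $t=r(K)>0$, this does not affect anything downstream.
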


First we give bounds for $\odot(K,L)$ in terms of $r/R$ and $D/R$.
Observe that inequalities in Proposition~\ref{prop:rDR} (2) - (3) can also be achieved by the positive-dilatation invariant
pseudo-metric defined in \cite{Toth}.

\begin{proposition}\label{prop:rDR}
Let $K, L\in\mathcal K^n_*$. Then:
\begin{enumerate}[\rm(1)]

\item $\odot(K,L)\leq 1$, and $\odot(K,L)=1$ only if $\dim K\neq \dim L$.

\item $\odot(K,L)\leq 2 \max\{1-r(\varrho(K)),1-r(\varrho(L))\}$.
%with "=" if
%$K=\B$ and $L=\{x\in\R^n:u^{\bot}x\leq 1\}\cap\B$, $||u||\leq 1$.

\item If $D(\varrho(K)), D(\varrho(L))< \sqrt{\frac{2n}{n-1}}$ then
$$\odot(K,L)\leq\max\left\{1-\sqrt{1-\frac{n-1}{2n}D(\varrho(K))^2},1-\sqrt{1-\frac{n-1}{2n}D(\varrho(L))^2}\right\}$$
%It holds "=" if $K=\B$ and a hyperplane
%$\sqrt{1-(n-1)D(L)^2/2n}u+u^{\bot}$, $||u||=1$, supports $L$.
\end{enumerate}
If $K, L\in \mathcal{B}^n$, then $\varrho$ can be supressed from (1), (2) and (3).
\end{proposition}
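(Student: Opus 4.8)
The plan is to reduce all three bounds to a single estimate for one set $A\in\mathcal{B}^n$, namely the Hausdorff distance $d_H(A,\mathbb B)$ from $A$ to its circumball. Two elementary observations drive the reduction. First, since $\mathbb B$ is $O(n)$-fixed and rotation-invariant, $\odot(K,\mathbb B)=\inf_{g\in O(n)}d_H(g\varrho(K),\mathbb B)=d_H(\varrho(K),\mathbb B)$. Second, whenever $A,B\subseteq\mathbb B$ both contain the origin (which holds for $\varrho(K),\varrho(L)$, as the Chebyshev point of $\varrho(K)$ is $0\in\varrho(K)$), every $a\in A\subseteq\mathbb B$ satisfies $d(a,B)\le\sup_{x\in\mathbb B}d(x,B)=d_H(B,\mathbb B)$, and symmetrically, so that
\begin{equation*}
d_H(A,B)\le\max\{d_H(A,\mathbb B),\,d_H(B,\mathbb B)\}.
\end{equation*}
Choosing $g=\mathrm{id}$ in the infimum gives $\odot(K,L)\le d_H(\varrho(K),\varrho(L))$, and combining this with the displayed comparison yields $\odot(K,L)\le\max\{d_H(\varrho(K),\mathbb B),d_H(\varrho(L),\mathbb B)\}$. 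Hence (1), (2), (3) follow once I bound $d_H(A,\mathbb B)$ for $A\in\mathcal{B}^n$ by $1$, by $2(1-r(A))$, and by $1-\sqrt{1-\tfrac{n-1}{2n}D(A)^2}$ respectively. The final clause is then immediate: if $K\in\mathcal{B}^n$ then $R(K)=1$ and $\check c(K)=0$, so $\varrho(K)=K$ and $\varrho$ may be deleted.

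For (1), since $0\in A$ and $A\subseteq\mathbb B$, every $x\in\mathbb B$ gives $d(x,A)\le\|x\|\le1$, so $d_H(A,\mathbb B)\le1$ and $\odot(K,L)\le1$. For the equality clause I would argue by contradiction: as $O(n)$ is compact the infimum is attained, so $\odot(K,L)=d_H(A,B)$ with $A,B\in\mathcal{B}^n$, $\dim A=\dim K$, $\dim B=\dim L$. If this equals $1$, there is (say) $a\in A$ with $d(a,B)=1$; since $0\in B$ this forces $\|a\|=1$ and that $B$ avoids the open unit ball centred at $a$, pinning $B$ into the region $\{z:a^\top z\le\tfrac12\|z\|^2\}$ through $0$. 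I would then show that this one-sided pinching, together with $0$ being the circumcenter of $B$ and the sphere-touching points of Proposition~\ref{p:charCirInr}(1), is incompatible with $\dim A=\dim B$, forcing $\dim K\neq\dim L$.

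For (2), if $A\supseteq c+r\mathbb B$ with $r=r(A)$, then $\|c\|\le1-r$, and for any $x\in\mathbb B$ either $x\in c+r\mathbb B\subseteq A$ (so $d(x,A)=0$) or $d(x,A)\le\|x-c\|-r\le(1+\|c\|)-r\le2(1-r)$. Hence $d_H(A,\mathbb B)\le2(1-r(A))$, which is exactly (2) after the comparison lemma.

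For (3) — the technically hardest point — write $\delta=d_H(A,\mathbb B)=\max_{x\in\mathbb S}d(x,A)$, attained at $x_0$ with nearest point $y_0\in A$, and set $u=(x_0-y_0)/\delta$ and $h=u^\top y_0$. The supporting hyperplane of $A$ at $y_0$ gives $A\subseteq\{z:u^\top z\le h\}$; since $0\in A$ we get $h\ge0$, and $u^\top x_0\le1$ gives $\delta\le1-h$. So it suffices to prove $h\ge\sqrt{1-\tfrac{n-1}{2n}D(A)^2}$, equivalently
\begin{equation*}
D(A)\ge\sqrt{\tfrac{2n}{n-1}}\,\sqrt{1-h^2}.
\end{equation*}
Here I would invoke the circumsphere-touching points $p^1,\dots,p^k$ of Proposition~\ref{p:charCirInr}(1), which satisfy $\|p^i\|=1$, $u^\top p^i\le h$ and $0\in\conv\{p^i\}$. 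Projecting them orthogonally onto $u^\perp\cong\mathbb R^{n-1}$ produces points $\bar p^i$ with $0\in\conv\{\bar p^i\}$ and $D(A)\ge D(\{\bar p^i\})$, and Jung's Theorem~\ref{p:Jung} in dimension $n-1$ yields $D(\{\bar p^i\})\ge\sqrt{\tfrac{2n}{n-1}}\,R(\{\bar p^i\})$. The crux, which I expect to be the main obstacle, is the circumradius estimate $R(\{\bar p^i\})\ge\sqrt{1-h^2}$: points with $u^\top p^i<-h$ project strictly inside the sphere of radius $\sqrt{1-h^2}$, so a naive second-moment argument only yields the weaker bound $\sqrt{1-h}$, and one must exploit the full constraint $0\in\conv\{p^i\}$ in $\mathbb R^n$ (where the circumradius is exactly $1$) to control the circumcenter of the projected configuration. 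As a consistency check, I expect equality throughout to be realized by the regular simplex inscribed in $\mathbb S$, which is precisely the equality case of Jung's inequality.
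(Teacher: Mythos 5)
Your reduction to the single estimate $d_H(A,\mathbb B)$ for $A\in\mathcal{B}^n$, via $\odot(K,L)\le d_H(\varrho(K),\varrho(L))\le\max\{d_H(\varrho(K),\mathbb B),d_H(\varrho(L),\mathbb B)\}$, is sound and is essentially how the paper argues (it writes the same thing as $L\subset\mathbb B\subset K+s\mathbb B$); the inequality in (1) and all of (2) are correct and match the paper. But two pieces are left as sketches, and one of them is the heart of the proposition. For (3), you explicitly flag the circumradius estimate $R(\{\bar p^i\})\ge\sqrt{1-h^2}$ for the projected touching points as ``the main obstacle'' and do not prove it; that is a genuine gap, since (as you note) the second-moment computation only gives $\sqrt{1-h}$, and the constraint $0\in\conv\{p^i\}$ enters in a way that is not routine to exploit after projecting. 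The paper avoids this entirely by a different geometric setup: under the hypothesis $D(K)<\sqrt{2n/(n-1)}$ it first shows (via Jung) that $S=\conv\{p^1,\dots,p^{n+1}\}$ is a full-dimensional simplex, takes the largest $t$ with $t\mathbb B\subset S$, and looks at the facet $F$ touched by $t\mathbb B$. The vertices of $F$ lie on $\mathbb S\cap\aff(F)$, which is an $(n-1)$-sphere of radius exactly $\sqrt{1-t^2}$ centred at the touching point $tu\in F$, so $R(F)=\sqrt{1-t^2}$ is an identity, not an estimate, and Jung applied inside $\aff(F)$ gives $t\ge\sqrt{1-\tfrac{n-1}{2n}D(K)^2}$ directly. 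Your projection onto $u^{\perp}$ destroys precisely the feature that makes this work (all relevant points equidistant from a known centre), which is why you are left needing an unproved lemma.

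The ``only if'' clause of (1) is also only sketched: you reduce to a configuration where $B$ is pinned into $\{z: a^{\top}z\le\tfrac12\|z\|^2\}$ and then say you ``would show'' this is incompatible with $\dim A=\dim B$, but that final step is exactly where all the difficulty sits, and it is not clear it can be completed — consider $A=\mathbb B$ and $B=\{z\in\mathbb B: z_1\le 0\}$, both in $\mathcal{B}^n$ and both $n$-dimensional, for which $\inf_{g\in O(n)}d_H(gA,B)=d(e^1,B)=1$. This configuration satisfies all the constraints you have extracted, so your contradiction cannot go through without further input. (The paper instead argues the contrapositive, rotating so that $\aff(gK)=\aff(L)$ and using relative inballs $r\mathbb B_H\subset gK$, $s\mathbb B_H\subset L$; note that its argument tacitly places those inballs at the circumcentre, which the same example shows is not always possible, so this clause deserves caution in any case.) In short: the framework and parts (1)-inequality and (2) are fine and agree with the paper, but (3) and the equality clause of (1) each contain a concrete unfilled gap at the decisive step.
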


\begin{proof} Since $\odot(K,L)=\odot(\varrho(L),\varrho(K))$, we can assume that $K$ and $L$ belong to $\mathcal{B}^n$.
By (1) of Proposition \ref{p:charCirInr}, $0\in K\cap L$, thus
$K\subset 0+\B\subset L+\B$ as well as $L\subset 0+\B\subset K+\B$,
from which $\odot(K,L)\leq 1$. If $\dim K=\dim L$, let $g\in O(n)$ be
such that $H:=\aff(gK)=\aff(L)$ and $r, s>0$ satisfying $r\B_H\subset gK$
and $s\B_H\subset L$, where $\B_H=\B\cap H$. Then we have the following containments:
\begin{align*}
&gK\subset s\B_H+(1-s)\B_H\subset L+(1-s)\B_H\subset L+(1-s)\B,\\
 &L\subset r\B_H+(1-r)\B_H \subset gK+(1-r)\B_H\subset gK+(1-r)\B_H,
\end{align*}
and hence $\odot(K,L)\leq\max\{1-r,1-s\}<1$.

We now show (2). Let $c\in\mathbb R^n$ be such that  $c+r(L)\B\subset L$.
Observe that $K\subset \B\subset c+r(L)\B+2(1-r(L))\B$ and
by an analogous argument one gets $\odot(K,L)\leq d_H(K,L)\leq 2\max\{1-r(K),1-r(L)\}$. Equality
holds if, for instance, $K=\B$ and $L=\{x\in\R^n:u^{\top}x\leq ||u||^2\}\cap\B$,  $||u||\leq 1$. Indeed, in this case
$r(L)=(1+||u||)/2$, $\odot(L,\B)=1-||u||$ and thus
\[
2\max\{1-1,1-r(L)\}=2(1-r(L))=2\frac{1-||u||}{2}=\odot(L,\B).
\]

We finally prove (3). Let $p^1,\dots,p^k\in K\cap\s$,
$k\in\{2,\dots,n+1\}$, be the points provided in (1) of Proposition
\ref{p:charCirInr}, and observe that
$S:=\conv(\{p^1,\dots,p^k\})\subset K$. We now compute
$s\geq 0$ such that $\B\subset S+s\B$, thus showing $L\subset\B\subset
S+s\B\subset K+s\B$.

%Let us remark that when $D(K)\geq\sqrt{\frac{2n}{n-1}}$ then $s=1$
%because the diameter of all $(n-1)$-dimensional simplices in $U(n)$
%ranges in the interval $[\sqrt{2n/(n-1)},2]$ (see Theorem
%\ref{p:Jung}), thus in this case $\odot(K,\B)\leq 1$, with no chance
%of a better upper bound as it is shown by taking $L=\B$ and
%$K=S^{n-1}\in U(n)$ an $(n-1)$-dimensional simplex with
%$D(K)=\sqrt{\frac{2n}{n-1}}$.

Since (1) in Proposition \ref{p:charCirInr} implies $0\in S\subset
K$, we define $t\geq 0$ as the biggest scalar satisfying $t\B\subset
S$.  Now,  by Theorem~\ref{p:Jung}, we have that the diameter of all $(n-1)$-dimensional simplices in $\mathcal{B}^n$ ranges in the interval $[\sqrt{2n/(n-1)},2]$. Since $D(S)\leq D(K)<\sqrt{\frac{2n}{n-1}}$, we conclude that $\dim S=n$
and thus $t>0$ and $k=n+1$.

Let us suppose
without loss of generality that $F=\conv(\{p^1,\dots,p^n\})$ satisfies $t\B\cap
F\neq\emptyset$. Let $u\in\s$ be such that $\aff(F)=tu+u^{\bot}$.
Then $F\subset(\aff(F)\cap\B)=tu+\sqrt{1-t^2}\B\cap u^{\bot}$ and
$tu\in F$. Using again (1) of Proposition \ref{p:charCirInr} we infer that
$R(F)=\sqrt{1-t^2}$. Now, by Theorem \ref{p:Jung} applied to $F$ in the subspace $\aff(F)$ it
follows that
\[
\sqrt{\frac{2n}{n-1}}\sqrt{1-t^2}\leq D(S)\leq D(K).
\]
Isolating $t$ we get
\[
t\geq\sqrt{1-\frac{n-1}{2n}D(K)^2}.
\]
Since $L\subset \B\subset t\B+(1-t)\B\subset S+(1-t)\B\subset K+(1-t)\B$ we conclude that
$s\leq 1-\sqrt{1-\frac{n-1}{2n}D(K)^2}$. Using a completely analogous
argument we obtain $K\subset L+s'\B$ for $s'\leq 1-\sqrt{1-\frac{n-1}{2n}D(L)^2}$. This
implies the inequality (3).
\end{proof}

In Proposition~\ref{prop:rDR},
inequality (3) seems to be non-sharp, but in any case, if we are allowed to select $L=\B$, we find a body $K$
for which (3) attains equality. Let $K=\conv(\{u,S^u\})$, where
$||u||=1$ and $S^u$ is a $(n-1)$-regular simplex contained in
$\aff (S^u)=-tu+u^{\bot}$, $t\in[0,1/n]$ and with circumball $\aff(S^u)\cap\B$. Using the computations
inside the proof we get that $t=\sqrt{1-R(S^u)^2}$ which together with Theorem \ref{p:Jung} gives
\[
\odot(K,\B)=1-t=1-\sqrt{1-\frac{n-1}{2n}D(K)^2}.
\]

Let us also observe that
%$\odot (K, L)\leq 1$ for every $K, L\in\mathcal K^n_*$. Thus,
if $D(\varrho(K))\geq\sqrt{\frac{2n}{n-1}}$ we cannot use Proposition~\ref{prop:rDR}-(3)
to get a better upper bound as it is shown by taking $L=\B$ and
$K=S^{n-1}\in \mathcal{B}^n$, an $(n-1)$-dimensional simplex with diameter $D(\varrho(K))$.
%$D(K)=\sqrt{\frac{2n}{n-1}}$.

%Let us remark that when $D(K)\geq\sqrt{\frac{2n}{n-1}}$ then $s=1$
% thus in this case $\odot(K,\B)\leq 1$, with no chance
%of a better upper bound as it is shown by taking $L=\B$ and
%$K=S^{n-1}\in U(n)$ an $(n-1)$-dimensional simplex with
%$D(K)=\sqrt{\frac{2n}{n-1}}$.

In \cite{BrGo} the authors computed, for a given set $K\in {\mathcal B^2}$
with fixed $r=r(K), \omega=\omega(K)$ and $D=D(K)$,
sets $K_m$ and $K_M$ satisfying $K_m\subset K\subset K_M$
sharing all radii with $K$ and being minimum and maximum up to
suitable orthogonal transformation. Thus, these results give
inmediately sharp upper bounds of $\odot(K,L)$ in terms
of all their radii in the cases in which we studied those sets
and when both $K$ and $L$ share all radii.

We present an example in the most general case
of n-dimensional convex bodies. To do so, we recall that
for any given set $K\in \mathcal{B}^n$, $CK$ is a \textit{completion} of $K$
within $\B$ whenever $K\subseteq CK\subseteq\B$, with $D(K)=D(CK)$, and if $CK\subsetneq L$
then $D(CK)<D(L)$ for any $L\in\K^n_0$. Those completions always exist in the
Euclidean space (see Scott \cite{Sc81}). The authors in \cite{GJ} (see also
\cite[Corollary 2.11]{BrGo2}) proved
a similar result to Proposition
\ref{p:DistJung}. More particularly, they showed in Theorem 2 that for any $K\in\K^n_0$ the following inequality holds
$$d_{BM}(CK,\B)\leq(n+\sqrt{2n(n+1)})/(n+2).$$

\begin{proposition}\label{p:DistJung}
Let $K, L\in \K^n_0$ attaining equality (\ref{desigualdad circumradio diametro}). Then
\[
\odot(K,L)\leq\sqrt{\frac{2(n+1)}{n}}-\frac{1}{n}-1.
\]
\end{proposition}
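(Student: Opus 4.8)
The plan is to use the rigidity of the equality case of Jung's inequality: it forces each of $K$ and $L$ to contain a regular simplex inscribed in its circumball, and once these simplices are made to coincide the problem reduces to a single, explicit distance estimate.

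First I would pass to normalized representatives. Since $\odot$ is invariant under similarities and, by the construction of $\Theta$,
$$\odot(K,L)=\Theta([K],[L])=d^*_H\big(O(n)(\varrho(K)),O(n)(\varrho(L))\big)=\inf_{g\in O(n)}d_H\big(g\varrho(K),\varrho(L)\big),$$
it suffices to bound $d_H(gA,B)$ for one convenient $g\in O(n)$, where $A:=\varrho(K)$ and $B:=\varrho(L)$ lie in $\mathcal{B}^n$. As $D(\cdot)/R(\cdot)$ is similarity invariant, $A$ and $B$ still attain equality in (\ref{desigualdad circumradio diametro}), and since $R(A)=R(B)=1$, Theorem~\ref{p:Jung} provides regular simplices $S_A\subseteq A$ and $S_B\subseteq B$ of edge length $D:=\sqrt{2(n+1)/n}$. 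Such a simplex has circumradius $1$; being contained in $\B$, its circumball must be $\B$, so its vertices lie on $\s$ and its incenter is the origin, with inradius $1/n$.

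Next I would align and sandwich. Any two regular simplices inscribed in $\s$ differ by an orthogonal map, so I would choose $g\in O(n)$ with $gS_A=S_B=:S$ and denote its vertices by $v^1,\dots,v^{n+1}\in\s$, so that $(v^i)^\top v^j=-1/n$ for $i\neq j$. With $A':=gA$, both $A'$ and $B$ contain $S$ and have diameter $D$; as every $v^i$ lies in both bodies, each $x\in A'$ (and each $x\in B$) satisfies $\|x-v^i\|\le D$, so $A',B\subseteq W:=\bigcap_{i=1}^{n+1}\big(v^i+D\B\big)$ while $S\subseteq A'\cap B$. Since $S\subseteq A',B\subseteq W$, for $x\in A'$ we get $d(x,B)\le d(x,S)$ and symmetrically, whence
$$\odot(K,L)\le d_H(A',B)\le\sup_{x\in W}d(x,S).$$
The whole statement thus reduces to the geometric estimate $\sup_{x\in W}d(x,S)\le D-1-\tfrac1n$, equivalently $W\subseteq S+\big(D-1-\tfrac1n\big)\B$.

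Finally I would prove this estimate, and here I expect the main obstacle to lie. For $x\in W\setminus S$ let $p$ be its nearest point of $S$, put $\delta:=\|x-p\|$ and $u:=(x-p)/\delta$; then $u=\sum_{m\in J}\beta_m(-v^m)$ with $\beta_m\ge0$ is an outer normal of $S$ at $p$, where $J=\{m:(v^m)^\top p=-1/n\}$ indexes the facets active at $p$. Writing $b:=\sum_{m\in J}\beta_m$ and forming the convex combination $\sum_{m\in J}(\beta_m/b)\|x-v^m\|^2\le D^2$, an expansion using $x=p+\delta u$, $(v^m)^\top p=-1/n$ and $u=-\sum_{m\in J}\beta_m v^m$ collapses to
$$\delta^2+2\delta\Big(\tfrac{b}{n}+\tfrac1b\Big)+\|p\|^2-1\le0.$$
If $p$ lies in the relative interior of a facet, then $J$ is a singleton, $b=1$, and $\|p\|\ge 1/n$ (the facet hyperplane is at distance $1/n$ from the origin), so the inequality yields exactly $\delta\le D-1-\tfrac1n$, the value attained at $x=-(D-1)v^j$. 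The difficulty is to rule out larger distances when $p$ projects onto a lower--dimensional face: there $u$ becomes a genuine positive combination of several $-v^m$ (so $b$ changes) while $\|p\|$ grows according to $\|p\|^2\ge(n-k)/(n(k+1))$ on a $k$--face, and one must show that these competing effects never let the bound above exceed the facet value $D-1-\tfrac1n$. I would treat this by exploiting the symmetry group of $S$ to reduce the maximization of the convex map $d(\cdot,S)$ over the convex body $W$ to the finitely many face--centroid normal directions and then comparing the resulting values; establishing that the facet direction is the extremal one is the delicate heart of the argument.
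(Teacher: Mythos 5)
Your reduction is sound as far as it goes: normalizing to $\mathcal{B}^n$, extracting from the equality case of Theorem~\ref{p:Jung} regular simplices whose circumball must be $\B$, aligning them by an element of $O(n)$, and bounding $\odot(K,L)$ by $d_H(A',B)\le\sup_{x\in W}d(x,S)$ with $W=\bigcap_i(v^i+D\B)$ are all correct steps. But the proof then hinges entirely on the containment $W\subseteq S+\bigl(D-1-\tfrac1n\bigr)\B$, and you only verify it when the nearest-point projection lands in the relative interior of a facet; for lower-dimensional faces you explicitly defer the competition between the growth of $\|p\|$ and the change of $b$ to an unexecuted symmetry/optimization argument, calling it ``the delicate heart.'' That is precisely the content of the proposition, so as written this is a genuine gap, not a routine verification. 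Note also that you are trying to prove something strictly stronger than necessary: $W$ is larger than any diametral completion of $S$ inside $\B$ (for $n\ge 3$ the ball intersection over a regular simplex even has diameter exceeding $D$, and you never intersect $W$ with $\B$ even though $A',B\subseteq\B$), so the face-by-face estimate you would need is harder than the one the statement requires.

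The paper avoids this optimization altogether by replacing $K$ and $L$ with completions $CK, CL\subseteq\B$. For a complete set in Euclidean space one has $D(CK)=r(CK)+R(CK)$ and the incenter coincides with the circumcenter, which immediately pins down $r(g^{-1}(CK))=D-1$ with insphere centered at the origin; the bound $t\ge r(g^{-1}(CK))-r(T^n)=D-1-\tfrac1n$ then comes for free, and the containment $g^{-1}(CK)\subseteq T^n+\bigl(D-1-\tfrac1n\bigr)\B$ is obtained by showing, via a short diameter contradiction with the opposite vertex, that the facet hyperplanes of the dilated simplex $\lambda T^n$ (which circumscribe $T^n+(r(g^{-1}(CK))-r(T^n))\B$) also support $g^{-1}(CK)$. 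If you want to salvage your route, either complete the face-by-face analysis of $\sup_{x\in W\cap\B}d(x,S)$, or import the completion argument: replace $A'$ and $B$ by completions inside $\B$ before comparing them with $S$.
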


\begin{proof}
Let $T^n\in \mathcal{B}^n$ be the n-dimensional regular simplex.
Since $K$ and $L$ are extremal in
Theorem \ref{p:Jung}, then $\varrho(K)$ and $\varrho(L)$ are too, and thus
we assume $K, L\in \mathcal{B}^n$. Moreover, let $g, h\in O(n)$ be such that
$g(T^n)\subseteq K$ and $h(T^n)\subseteq L$.
Considering completions $CK$ and $CL$ of $K$ and $L$, both within $\B$,
it is well known that $$D(T^n)=D(K)=D(CK)=r(CK)+R(CK)=r(CK)+1$$ (see \cite{Sa} or \cite{BrK2}). The same holds
if we substitute $K$ by $L$. Moreover, the incenter and circumcenter of
a completion in the Euclidean space coincide (and equals $0$ in this case). Therefore
\[
\odot(K,L)=\odot(g^{-1}(K),h^{-1}(L))\leq \max\{\odot(T^n,h^{-1}(CL)),\odot(T^n,g^{-1}(CK))\}.
\]

Since $T^n\subset g^{-1}(CK)$ and $T^n\subset h^{-1}(CL)$, we only have to compute the smallest $t>0$ such that
$g^{-1}(CK)$ and $h^{-1}(CL)$ are both contained in $T^n+t\B$. Since $g^{-1}(CK)$ and $h^{-1}(CL)$ share inradius $D(T^n)-1$
(see above), $t$
has to be equal or  bigger than  $r(g^{-1}(CK))-r(T^n)=D(T^n)-1-1/n=\sqrt{2(n+1)/n}-1-1/n$.

On the other side, the hyperplanes supporting the facets of  $\lambda T^{n}$, where $$\lambda:=r(g^{-1}(CK))/r(T^n),$$
also support the set $T^n+(r(g^{-1}(CK))-r(T^n))\B$. We claim that these hyperplanes
support $g^{-1}(CK)$ too. If not, let $H$ be one of these hyperplanes strictly separating
a point $p\in g^{-1}(CK)$ from $r(g^{-1}(CK))/r(T^n)T^n$, and let $q$ be the vertex of $T^n$
opposing $H$. Since $q\in T^n\subset g^{-1}(CK)$ and the distance between $H$ and $q$
is $1+r(g^{-1}(CK))=D(T^n)$, we have that $D(g^{-1}(CK)) \geq ||p-q||>D(T^n)=D(K)$, a contradiction.
Therefore $g^{-1}(CK)\subseteq T^n+(\sqrt{2(n+1)/n}-1-1/n)\B$.
Repeating this argument
for $L$ we finally derive that
$\odot(K,L)\leq \sqrt{2(n+1)/n}-1-1/n$.

The proof itself shows that equality holds when $K=T^n$ and $L=CT^n$.
\end{proof}

The next results show an improved upper bound of $\odot(K,L)$ in terms
of $r(K), r(L), R(K), R(L)$, which can be only achieved when using
the distance $\odot(\cdot,\cdot)$ but clearly not if we use the
dilatation invariant defined in \cite{Toth}.

\begin{proposition}\label{p:applicInr}
Let $K, L\in\mathcal K^n_*$. Then
\begin{equation*}
\begin{split}
\odot(K,L)\leq&\max\left\{\sqrt{(1-r(\varrho(L))+r(\varrho(K)))^2+1-r(\varrho(K))^2}-r(\varrho(L)),\right.\\
&\left.\sqrt{(1-r(\varrho(K))+r(\varrho(L)))^2+1-r(\varrho(L))^2}-r(\varrho(K))\right\}.
\end{split}
\end{equation*}
If $K, L\in \mathcal{B}^n$, then $\varrho$ can be ommited.
\end{proposition}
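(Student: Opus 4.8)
The plan is to reduce everything to the compact model $\mathcal B^n$ and then to produce, for $K,L\in\mathcal B^n$, a single orthogonal transformation realizing the two asserted bounds. First I would invoke similarity–invariance: since $\varrho(K)$ and $\varrho(L)$ lie in the $\Sim(n)$–orbits of $K$ and $L$ (Lemma~\ref{l:On retraction}) and $\odot$ is $\Sim(n)$–invariant, $\odot(K,L)=\odot(\varrho(K),\varrho(L))$ with $\varrho(K),\varrho(L)\in\mathcal B^n$; this is exactly the reduction that lets $\varrho$ be suppressed, so I may assume $K,L\in\mathcal B^n$. By the construction of $\odot$ in \S\ref{s: metrica} (through $\psi$ and the quotient metric $d_H^*$), for $K,L\in\mathcal B^n$ one has $\odot(K,L)=\inf_{g\in O(n)}d_H(gK,L)$, so it suffices to exhibit one $g\in O(n)$ with $d_H(gK,L)$ bounded by the right–hand side.

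The key covering step reduces each directed Hausdorff distance to an \emph{eccentric} circumradius. Fix inballs $c_K+r(K)\B\subset K$ and $c_L+r(L)\B\subset L$, and note $\gamma_L:=\|c_L\|\le 1-r(L)$ and $\gamma_K:=\|c_K\|\le 1-r(K)$ since the inballs lie inside $\B$. For any $\rho\ge r(L)$ the concentric decomposition $c_L+\rho\B=(c_L+r(L)\B)\oplus(\rho-r(L))\B\subset L+(\rho-r(L))\B$ shows that if $gK\subset c_L+\rho\B$ then $\sup_{a\in gK}d(a,L)\le\rho-r(L)$; taking $\rho=\max_{a\in gK}\|a-c_L\|$ yields $\sup_{a\in gK}d(a,L)\le\max_{a\in gK}\|a-c_L\|-r(L)$, and symmetrically $\sup_{b\in L}d(b,gK)\le\max_{b\in L}\|b-gc_K\|-r(K)$. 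To bound the circumradius I would use a Pythagorean expansion: for $a\in gK\subset\B$ one has $\|a-c_L\|^2=\|a\|^2-2\gamma_L(\hat c_L^{\top}a)+\gamma_L^2\le 1+2\gamma_L r(K)+\gamma_L^2$ as soon as $\hat c_L^{\top}a\ge -r(K)$ for all $a\in gK$, where $\hat c_L=c_L/\gamma_L$. Since $1+2\gamma_L r(K)+\gamma_L^2$ increases in $\gamma_L$ and $\gamma_L\le 1-r(L)$, it is at most $(1-r(L)+r(K))^2+1-r(K)^2$, i.e. the square of the first term inside the maximum; subtracting $r(L)$ gives the first entry, and the symmetric computation gives the second.

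It remains to guarantee, through the choice of $g$, the two support conditions (A) $\hat c_L^{\top}a\ge -r(K)$ for all $a\in gK$ and (B) $\widehat{gc_K}^{\top}b\ge -r(L)$ for all $b\in L$; each says that a prescribed direction is one in which the body has support function at most its inradius. I would produce such \emph{good} directions from Proposition~\ref{p:charCirInr}(2): writing the inball–tangency normals $u^1,\dots,u^k$ of $K$ with $0\in\conv(\{u^1,\dots,u^k\})$ and $K\subset\bigcap_i\{x:(u^i)^{\top}(x-c_K)\le r(K)\}$, the relation $0=\sum_i\lambda_i u^i$ forces some $u^j$ with $(u^j)^{\top}c_K\le 0$, whence $w_K:=-u^j$ satisfies $w_K^{\top}a\ge -r(K)$ for every $a\in K$; symmetrically one gets a good direction $w_L$ for $L$. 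Condition (A) holds whenever $g$ carries such a $w_K$ to $\hat c_L$, and (B) holds whenever $g$ carries $\hat c_K$ to such a $w_L$. The hard part will be that a single $g\in O(n)$ must do both at once: an orthogonal map with $g w_K=\hat c_L$ and $g\hat c_K=w_L$ exists precisely when the angles match, i.e. $w_K^{\top}\hat c_K=\hat c_L^{\top}w_L$. I expect the crux to be exactly this matching — selecting, among the available good directions of $K$ and of $L$ (whose existence and abundance come from $0\in\conv(\{u^i\})$, both inner products being nonnegative), two whose inner products with the respective inball–center directions coincide, so that one orthogonal transformation realizes both alignments and hence both bounds simultaneously. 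The degenerate cases $c_K=0$ or $c_L=0$ trivialize one of the two conditions and I would dispatch them separately.
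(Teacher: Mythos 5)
Your overall route is the paper's: reduce to $K,L\in\mathcal{B}^n$ via $\varrho$ and the $\Sim(n)$-invariance of $\odot$, use $\odot(K,L)=\inf_{g\in O(n)}d_H(gK,L)$, and bound each directed Hausdorff distance by trapping one body in the cap $\{x\in\B:x^{\top}u\le r(K)\}$ and measuring that cap's eccentric circumradius from the other body's incenter. Your expansion $\|a-c_L\|^2\le 1+2\gamma_L r(K)+\gamma_L^2\le(1-r(L)+r(K))^2+1-r(K)^2$, monotone in $\gamma_L\le 1-r(L)$, is exactly the paper's Pythagorean computation and is correct, as is your extraction of the admissible direction from Proposition~\ref{p:charCirInr}(2) via $0=\sum_i\lambda_i(u^i)^{\top}c$. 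The one point where you differ is that you make explicit something the paper passes over in silence: the published proof produces one rotation $g$ with $K\subset gL+t\B$ and then gets $L\subset K+s\B$ ``by an analogous argument'', i.e.\ with a second orthogonal map obtained by exchanging the roles of $K$ and $L$, whereas $\odot(K,L)\le\max\{t,s\}$ requires a \emph{single} $g$ controlling both directed distances. So you have correctly located the crux rather than overlooked an idea the paper contains.

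That said, the mechanism you propose for closing the gap cannot work in general, so your argument (like the paper's) is incomplete at exactly this step. The matching condition $w_K^{\top}\hat c_K=w_L^{\top}\hat c_L$ compares two quantities determined independently by $K$ and by $L$, and nothing forces their attainable ranges to meet. Concretely, for the half-ball $K=\{x\in\B:x^{\top}e^1\le 0\}$ the inball tangency normals are $\pm e^1$ and the only one with $u^{\top}c_K\le 0$ is $u=e^1=-\hat c_K$, giving $w_K^{\top}\hat c_K=1$; for a thin isosceles triangle $L\in\mathcal{B}^2$ with apex at $e^1$, the admissible normals are the two long-edge normals, which are nearly orthogonal to $\hat c_L$, so $w_L^{\top}\hat c_L$ is close to $0$. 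No $g\in O(2)$ realizes both alignments, and enlarging the admissible sets to all $u$ with $\max_{a\in K}a^{\top}u\le r(K)$ does not help for this pair. This does not refute the proposition --- here $\max\{t,s\}=\sqrt3>1$, so the bound is vacuous by Proposition~\ref{prop:rDR}(1) --- but it shows the simultaneous-alignment step needs a genuine additional argument (for instance, handling the regime $\max\{t,s\}<1$, which forces both inradii to be large and the cones of admissible directions to be wide) that neither your proposal nor the paper supplies. The degenerate cases $c_K=0$ or $c_L=0$ that you set aside are in fact the easy ones; the difficulty is precisely the generic off-center case.
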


\begin{proof} Without loss of generalization, we assume that  $K, L\in \mathcal{B}^n$. We start by computing an orthogonal
transformation $g\in O(n)$ and the smallest $t>0$ such that
$K\subset gL+t\B$. Let $c, c'\in \R^n$ be such that $c+r(K)\B\subset
K$ and $c'+r(L)\B\subset L$. Let $\lambda_i\geq 0$ and $u^i\in\s$,
$i=1,\dots,j$, $2\leq j\leq n+1$ (with $0=\sum_{i=1}^j\lambda_iu^i$ and $c+r(K)u^i\in\partial
K$), be the scalars and vectors provided by (2) of Proposition
\ref{p:charCirInr}. Since
$0=\sum_{i=1}^j\lambda_i (c^{\top}u^i)$ then, for one of those
products, say $i=1$, it holds $c^\top u^1\leq 0$. Let $g\in O(n)$
be a rotation such that $(u^1)^{\top}(gc')=-||u^1||\,||gc'||=-||gc'||$.

Observe that the farthest distance from points in the set $\{c+x:x^{\top}u^1\leq
r(K)\}\cap\B$ to $g(c'+r(L)\B)$ is attained by the points in the intersection
$(c+r(K)u^1+(u^1)^{\bot})\cap\s$. Furthermore, this distance is even bigger
when $c^\top u^1 = 0$ and $(1-r(L))gc'/||gc'||=gc'$ (which occurs
iff $g(c'+r(L)\B)$ is tangent to $\B$). So in order to estimate $t$, let us assume that this is our case. Let $p$
be a point in $(c+r(K)u^1+(u^1)^{\bot})\cap\s$ which is at
maximal distance from $g(c'+r(L)\B)$. Let us write $p$ as the linear combination
$$p:=r(K)u^1+\sqrt{1-r(K)^2}v$$ where $v$ is a vector in $\s$  orthogonal to $u^1$. Then by Pythagorean Theorem we
get that
\begin{equation*}
\begin{split}
(r(L)+t)^2&= ||gc'-p||^2=(||gc'||+||r(K)u^1||)^2+||r(K)u^1-p||^2\\
&=(1-r(L)+r(K))^2+1-r(K)^2.
\end{split}
\end{equation*}
Thus if $t=\sqrt{(1-r(L)+r(K))^2+1-r(K)^2}-r(L)$, then
$K\subset\{x:x^{\top}u^1\leq r(K)\}\cap\B\subset g(c'+r(L)\B)+t\B\subset gL+t\B$.
By an analogous argument we obtain that $L\subset K+s\B$ for
$s=\sqrt{(1-r(K)+r(L))^2+1-r(L)^2}-r(K)$. This completes the proof.
 \end{proof}

\begin{remark}
The inequality in Proposition~\ref{p:applicInr} attains equality if, for example,
$L=\conv(\{(e^n)^{\bot}\cap\s,-te^n+(1-t)\B\})$ and
$K=\{x\in\R^n:x^{\top}e^n\leq s\}\cap\B$, with $t,s>0$ satisfying
the properties: $L\subset K$ and the segments
$T=[-te^n,se^n+\sqrt{1-s^2}e^1]$ and $[p,e^1]$ are orthogonal, where
$p=T\cap(-te^n+(1-t)\s)$. It is clear that $R(L)=1$, $r(L)=1-t$,
$R(K)=1$, $r(K)=(1+s)/2$. Since $L\subset K$, then $\odot(K,L)=c$ where $c$ is the smallest scalar $c>0$ such that $K\subset L+c\B$. Due to the properties that
$K$ and $L$  satisfy, and the considerations in the proof, the value of $c$ is  exactly $\sqrt{(1-r(L)+r(K))^2+1-r(K)^2}-r(L)$.
\end{remark}

\medskip

Let us consider $f:\K^n_*\rightarrow[0,1]^2$,
$f(K):=(r(\varrho(K)),D(\varrho(K)))$
be the 2-dimensional Blaschke-Santal\'o diagram $(r,D,R)$ (see \cite{Sa} for
the case of $\K^2_*$). Then it yields from Propositions \ref{prop:rDR} and \ref{p:applicInr}
the following result in which we give bounds for the distance $\odot(K,L)$
depending on their values $f(K)$ and $f(L)$.
\begin{corollary}\label{diagram}
Let $p,q\in f(\K^n_*)$ and $K_p,K_q\in\K^n_*$ satisfying $f(K_p)=p$
and $f(K_q)=q$, with $p=(p_1, p_2)$ and $q=(q_1, q_2)$. Then

$$
\odot(K_p,K_q)\leq\max\left\{\sqrt{(1-q_1+p_1)^2+1-p_1^2}-q_1,\sqrt{(1-p_1+q_1)^2+1-q_1^2}-p_1\right\}.$$
If additionally $p_2,q_2\leq\sqrt{\frac{n}{2(n-1)}}$, then
\begin{equation*}
\begin{split}
\odot(K_p,K_q)&\leq\min\left\{\max\left\{1-\sqrt{1-\frac{n-1}{2n}p^2_2},1-\sqrt{1-\frac{n-1}{2n}q^2_2}\right\},\right.\\
&\left.\max\bigg{\{}\sqrt{(1-q_1+p_1)^2+1-p_1^2}-q_1,\sqrt{(1-p_1+q_1)^2+1-q_1^2}-p_1\bigg{\}}\right\}.
\end{split}
\end{equation*}
\end{corollary}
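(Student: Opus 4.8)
The plan is to derive both bounds by rewriting Propositions~\ref{prop:rDR} and~\ref{p:applicInr} in the coordinates of the shape diagram $f$ and, for the second estimate, observing that $\odot(K_p,K_q)$ is controlled simultaneously by two independent bounds and hence by their minimum. The first step is purely notational: since $f(K)=(r(\varrho(K)),D(\varrho(K)))$, the hypotheses $f(K_p)=p=(p_1,p_2)$ and $f(K_q)=q=(q_1,q_2)$ say exactly that $p_1=r(\varrho(K_p))$, $p_2=D(\varrho(K_p))$, $q_1=r(\varrho(K_q))$ and $q_2=D(\varrho(K_q))$.

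With these identifications in hand, the first inequality is immediate: it is Proposition~\ref{p:applicInr} applied to the pair $K=K_p$, $L=K_q$. Substituting $r(\varrho(K_p))=p_1$ and $r(\varrho(K_q))=q_1$ into the bound of that proposition reproduces the claimed right-hand side term by term, so no further work is required.

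For the second inequality I would invoke both propositions at once. The extra hypothesis on $p_2=D(\varrho(K_p))$ and $q_2=D(\varrho(K_q))$ is designed precisely to place both diameters below the threshold $\sqrt{2n/(n-1)}$ under which Proposition~\ref{prop:rDR}-(3) is valid; substituting $p_2$ and $q_2$ into that bound produces the first entry of the inner minimum. The second entry is, once more, the bound already obtained from Proposition~\ref{p:applicInr}. Since $\odot(K_p,K_q)$ is dominated by each of these two quantities separately, it is dominated by the smaller of them, which is exactly the asserted estimate.

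The only point deserving care --- and the sole (minor) obstacle --- is verifying that the diameter restriction in the statement really does put $D(\varrho(K_p))$ and $D(\varrho(K_q))$ in the admissible range of Proposition~\ref{prop:rDR}-(3), i.e. strictly below $\sqrt{2n/(n-1)}$; this threshold is the minimal diameter of an $(n-1)$-dimensional member of $\mathcal{B}^n$ dictated by Jung's Theorem~\ref{p:Jung}, and it is exactly what forces $\dim S=n$ in the proof of that proposition. Once this threshold compatibility is checked, the corollary follows by the purely formal combination above, together with the trivial observation that a common upper bound for two numbers bounds their minimum; no new geometric construction beyond those in the two cited propositions is needed.
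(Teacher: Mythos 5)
Your proposal is correct and matches the paper's own (essentially unstated) argument: the paper simply asserts that the corollary "yields from Propositions~\ref{prop:rDR} and~\ref{p:applicInr}," which is exactly the substitution-and-minimum combination you carry out. Your check that $p_2,q_2\leq\sqrt{n/(2(n-1))}$ implies the strict bound $<\sqrt{2n/(n-1)}$ required by Proposition~\ref{prop:rDR}-(3) is the right point to verify, and it holds since $\sqrt{n/(2(n-1))}$ is half of that threshold.
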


The next corollary shows a uniform stability result of $r$ and $R$ in
the Euclidean ball with respect to $\odot(\cdot,\cdot)$. The proof is a  direct consequence of
 Proposition~\ref{p:applicInr}.
\begin{corollary}
Let $K, L\in\K^n_*$ be such that $1-\varepsilon\leq r(\varrho(K))$
and $1-\varepsilon\leq r(\varrho(L))$ with $\varepsilon>0$. Then
\[
\odot(K,L)\leq\sqrt{1+4\varepsilon}+\varepsilon-1.
\]
\end{corollary}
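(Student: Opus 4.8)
The plan is to deduce the bound directly from Proposition~\ref{p:applicInr} by estimating each of the two symmetric terms appearing in its right-hand side. Write $\alpha=r(\varrho(K))$ and $\beta=r(\varrho(L))$, so the hypothesis reads $1-\varepsilon\le\alpha$ and $1-\varepsilon\le\beta$. Since $\varrho(K),\varrho(L)\in\mathcal{B}^n$ are contained in $\B$, their inradii cannot exceed the inradius of $\B$, whence also $\alpha\le 1$ and $\beta\le 1$; recording this a priori upper bound is one of the two points needing care. First I would dispose of the trivial regime $\varepsilon\ge 1$: there the hypothesis is vacuous, yet the claimed bound satisfies $\sqrt{1+4\varepsilon}+\varepsilon-1\ge\sqrt 5>1\ge\odot(K,L)$ by Proposition~\ref{prop:rDR}(1), so the inequality holds automatically. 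Hence I may assume $0<\varepsilon\le 1$, which in particular gives $\varepsilon^2\le 2\varepsilon$.

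The core is a single algebraic estimate applied to the first term $\sqrt{(1-\beta+\alpha)^2+1-\alpha^2}-\beta$. Expanding the radicand and cancelling the $\pm\alpha^2$ yields the identity
\[
(1-\beta+\alpha)^2+1-\alpha^2=1+(1-\beta)^2+2\alpha(1-\beta).
\]
Setting $w=1-\beta\in[0,\varepsilon]$ and using $\alpha\le 1$, the right-hand side is at most $1+w^2+2w$; since $w\mapsto w^2+2w$ is increasing on $[0,\infty)$ and $w\le\varepsilon$, it is in turn at most $1+\varepsilon^2+2\varepsilon$, which by $\varepsilon^2\le 2\varepsilon$ is bounded by $1+4\varepsilon$. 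Taking square roots and using $\beta\ge 1-\varepsilon$ gives
\[
\sqrt{(1-\beta+\alpha)^2+1-\alpha^2}-\beta\le\sqrt{1+4\varepsilon}-(1-\varepsilon)=\sqrt{1+4\varepsilon}+\varepsilon-1.
\]

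The second term in the maximum of Proposition~\ref{p:applicInr} is obtained from the first by interchanging the roles of $\alpha$ and $\beta$, and the estimate above is symmetric under this interchange: it used only $\alpha\le 1$, $1-\beta\le\varepsilon$ and $\beta\ge 1-\varepsilon$, each of which has its mirror image among the hypotheses. Thus the same upper bound $\sqrt{1+4\varepsilon}+\varepsilon-1$ controls both terms, hence their maximum, which by Proposition~\ref{p:applicInr} dominates $\odot(K,L)$. There is no genuine obstacle in the argument; the only delicate points are the a priori bound $\alpha,\beta\le 1$ coming from containment in $\B$, and the harmless case distinction on $\varepsilon$ needed to justify replacing $1+\varepsilon^2+2\varepsilon=(1+\varepsilon)^2$ by the looser quantity $1+4\varepsilon$ under the radical.
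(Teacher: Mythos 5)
Your proof is correct and takes the same route as the paper: the paper's entire proof of this corollary is the remark that it ``is a direct consequence of Proposition~\ref{p:applicInr}'', and you supply exactly the verification that is left implicit, including the two details genuinely worth recording (the a priori bound $r(\varrho(K)),r(\varrho(L))\le 1$ from containment in $\B$, and the trivial regime $\varepsilon\ge 1$ needed so that $\varepsilon^2\le 2\varepsilon$). Incidentally, your cancellation of the $\alpha^2$ terms actually yields the radicand $(1+\varepsilon)^2$ and hence the sharper estimate $\odot(K,L)\le(1+\varepsilon)-(1-\varepsilon)=2\varepsilon$, which you then deliberately weaken to match the stated form.
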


%Finally, we show a stability result of $r, D$ and $R$ in the
%equilateral triangle $T^2$, by giving an upper bound of
%$\odot(K,T^2)$ in terms of those functionals. Observe that this
%result cannot be achieved when using a dilatation invariant measure
%as \cite{Toth}.

\medskip

\begin{remark}
A natural result that $\odot$ achieves is a stability result for the functional
$F(K):=(r(K),D(K),R(K))$ in the regular simplex.
This stability relies in the fact that
for any $K\in\K^n$ with $F(K)=F(T^n)$, $T^n$ being an n-dimensional regular simplex,
there exists a suitable $g\in O(n)$ such that $gK=T^n$. Namely, we can assume without loss of generality that
$K, T^n\in \mathcal{B}^n$ as $\varrho(K)$ and $\varrho(T^n)$ share radii too. Since $$D(K)=D(T^n)=\sqrt{2(n+1)/n}R(T^n)=\sqrt{2(n+1)/n}R(K),$$
$K$ attains equality in Theorem \ref{p:Jung}, and thus there exists $g\in O(n)$ such that
$T^n\subset g(K)$ and hence $r(T^n)\leq r(g(K))$. What's more, if $T^n\neq g(K)$, then
there exists a point $p\in g(K)\backslash T^n$ which, by the convexity of $g(K)$, implies that
$\conv(p,T^n)\subseteq g(K)$. It is a straighforward computation that $r(T^n)<r(\conv(p,T^n))\leq r(g(K))=r(K)$,
a contradiction. Thus $T^n=g(K)$, as we wanted to show.
\end{remark}

\section{Relations between $\odot$ and the other distances}\label{s:final}

We will start this last section by showing  another way to construct a $G$-invariant pseudometric for convex bodies, where $G$ is a closed subgroup of $\Aff(n)$ containing the group $\Dil^+$ of all positive dilatations.  Recall that a map $g:\mathbb R^n\to\mathbb R^n$ is a positive dilatation if there exists a constant $\lambda>0$ and a fixed point $u\in\mathbb R^n$ such that
$$g(x)=\lambda x+u\quad \text{ for all }x\in\mathbb R^n.$$

\begin{remark}
Let $G$ be a closed subgroup of $\Aff(n)$ containing all positive dilatations and consider the function $d_G:\mathcal K^n_0\times \mathcal K^n_0\to [1,\infty)$ defined by the formula
$$d_G(A, B)=\inf \{\alpha\geq 1\mid A\subset \phi(B)\subset \alpha A+z,\quad \phi\in G,\,z\in \mathbb R^n\}.$$
For any convex bodies $A, B$ and $C$, the function $d_G$ satisfies the following conditions:
\begin{enumerate}[\rm(1)]
\item $d_G(A,B)=1$ if and only if $A=\phi(B)$ for some $\phi\in G$.
\item $d_G(A,B)=d_G(B,A)$.
\item $d_G(A,B)\leq d_G(A,C)\cdot d_G(C,B)$.
\item $d_G(A,B)=d_G(\phi(A), \phi(B))$ for every $\phi\in G$.
\end{enumerate}
\end{remark}

\begin{proof}
All conditions are easy to verify. We just point out that part ``only if'' of  $(1)$ is supported in the fact that each orbit $G(A)$ is closed in $\mathcal K^n_0$ with respect to the Hausdorff distance.  Indeed, since the action of $\Aff(n)$  in $\mathcal{K}^n_0$ is proper (\cite[3.3]{AntJon}) and $G\subset \Aff(n)$ is closed, we conclude that $G$ acts in $\mathcal{K}^n_0$ properly too. Now, by \cite[Proposition 1.1.4]{Palais 61}, each $G$-orbit must be closed, as we have claimed.
\end{proof}

Let us remark that the previous result is just an obvious modification of the extended Banach-Mazur distance. This theorem was already observed by G. Toth in \cite{Toth} when the group $G=\Dil^+$.

\begin{corollary}
For any closed subgroup $G$ of $\Aff(n)$ containing the group of all positive dilatations, the function $\rho_G:\mathcal{K}^n_0/G\times\mathcal K_0^n/G\to [0,\infty)$ defined by the rule
$$\rho_G(A,B)=\ln (d_G(A,B))$$
is a well defined pseudometric satisfying conditions \rm{(\ref{f:1})} and \rm{(\ref{f:2})}.
\end{corollary}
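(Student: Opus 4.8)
The plan is to verify each of the four properties of the corollary directly from the corresponding properties of $d_G$ established in the preceding remark, together with the general observation from the introduction that a metric on an orbit space pulls back to a pseudometric on the space satisfying (\ref{f:1}) and (\ref{f:2}). First I would note that since $\ln$ is a strictly increasing function with $\ln(1)=0$, property (1) of the remark gives immediately that $\rho_G(A,B)=\ln(d_G(A,B))=0$ if and only if $d_G(A,B)=1$, which in turn holds if and only if $A=\phi(B)$ for some $\phi\in G$; this is precisely the condition that $A$ and $B$ lie in the same $G$-orbit, making $\rho_G$ well defined on the orbit space $\mathcal K^n_0/G$ and establishing the ``only vanishes on the diagonal'' part needed for a genuine metric there.

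Next I would check that $\rho_G$ is symmetric and satisfies the triangle inequality on $\mathcal K^n_0/G$. Symmetry follows directly from property (2): $\rho_G(A,B)=\ln(d_G(A,B))=\ln(d_G(B,A))=\rho_G(B,A)$. For the triangle inequality, I would apply $\ln$ to the multiplicative inequality in property (3): since $d_G(A,B)\leq d_G(A,C)\cdot d_G(C,B)$ and $\ln$ is increasing and turns products into sums, we obtain
\[
\rho_G(A,B)=\ln(d_G(A,B))\leq \ln\big(d_G(A,C)\cdot d_G(C,B)\big)=\rho_G(A,C)+\rho_G(C,B).
\]
Together with nonnegativity (which holds because $d_G\geq 1$ forces $\ln(d_G)\geq 0$) and the vanishing condition from the first step, this shows that $\rho_G$ descends to a bona fide metric on the orbit space $\mathcal K^n_0/G$.

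Finally, to obtain the pseudometric statement on $\mathcal K^n_0$ itself satisfying (\ref{f:1}) and (\ref{f:2}), I would invoke the invariance property (4): $d_G(\phi(A),\phi(B))=d_G(A,B)$ combined with the orbit characterization shows that $\rho_G$ is constant on pairs of $G$-orbits, so setting $\rho_G(A,B):=\delta(G(A),G(B))$ for the induced metric $\delta$ on the quotient realizes exactly the construction described in the introduction. Condition (\ref{f:1}) is then the first step above, and condition (\ref{f:2}) follows because replacing $A,B$ by $\phi A,\psi B$ does not change their $G$-orbits. I do not expect any genuine obstacle here: the entire argument is the routine observation that taking logarithms converts a multiplicative ``Banach-Mazur type'' gauge satisfying (1)--(4) into an additive pseudometric, exactly as in the classical case $G=\Aff(n)$ recalled in the introduction. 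The only point deserving a word of care is ensuring $\rho_G$ is finite and nonnegative, which is immediate from $1\leq d_G<\infty$, the latter finiteness being guaranteed because $G$ contains all positive dilatations (so the defining infimum is over a nonempty set).
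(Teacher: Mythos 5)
Your argument is correct and is exactly the intended one: the paper states this corollary without proof as an immediate consequence of properties (1)--(4) of $d_G$ in the preceding remark, and taking logarithms to convert the multiplicative gauge into an additive pseudometric is precisely the routine step the authors have in mind (mirroring the classical $d_{BM}$ case recalled in the introduction). Your added remarks on finiteness of $d_G$ and on well-definedness on the orbit space are sound and consistent with the paper.
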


An interesting case arises when we take as the group $G$ the group of all similarities $\Sim(E)$ of a Minkowsky space $E=(\mathbb R, \|\cdot\|_*)$. In this case, $\Sim(E)$ is the closed subgroup of $\Aff(n)$  generated by all isometries of $E$ and the group of all positive dilatations $\Dil^+$.
\medskip

Now, we will  discuss briefly what is the relationship between $\odot$ and
other distances.

First, observe that the easy way to compare  $\odot$ to the Hausdorff distance is throughout the obvious inequality
$$d_H(\varrho (K), \varrho (L))\geq \odot (K,L).$$
Despite this inequality,
 there is not a clear way to compare $\odot$ to $d_H$. Indeed,  if  $T$ is a segment and  $B$ is a ball, we always have $\odot (T, B)=1$.
 However, we can always choose $T$ and $B$ in such a way that the Hausdorff distance between them is arbitrarily large or arbitrarily small.

Next,  let us observe that $d_{G}(K, L)\leq d_{G'}(K,L)$ if $G'\subset G\subset \Aff(n)$.
By this we have the following inequalities:

\[
d_{BM}(K,L)\leq d_{\Sim(n)}(K,L)\leq d_{Dil^+}(K,L).
\]

In the particular case when $G=\Sim(n)$, it is interesting to ask what  the relationship between $d_{\Sim (n)}$ and $\odot$ is. To answer this question, first let us observe that $d_{\Sim(n)}$ is only defined for compact convex sets of the same dimension, while $\odot$ is defined in a larger class of compact convex sets. Taking this into account, we now show the existing relation between $\odot$ and $d_{\Sim(n)}$.

\begin{proposition}
Let $K,L\in\K^n_0$. Then
\[
\odot(K,L)+1\leq d_{\Sim(n)}(K,L)\leq
\left(1+\frac{R(K)}{r(K)}\odot(K,L)\right)
\left(1+\frac{R(L)}{r(L)}\odot(K,L)\right).
\]
\end{proposition}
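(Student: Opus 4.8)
The plan is to prove two inequalities separately, translating between the multiplicative language of $d_{\Sim(n)}$ and the additive Hausdorff-based quantity $\odot$. Throughout I would use the normalization established earlier: since $\odot(K,L)=\odot(\varrho(K),\varrho(L))$ and the statement concerns bodies of full dimension $n$, I may assume $K,L\in\mathcal{B}^n$, so that $C(K)=C(L)=\mathbb B$, $R(K)=R(L)=1$, and $0\in K\cap L$. Note however that the right-hand side mixes the radii $r(K),R(K),r(L),R(L)$ of the \emph{original} bodies, so I must be careful to state the bounds for general $K,L$ and only pass to the normalized representatives for the geometric computations, restoring the ratios $R/r$ afterwards.

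\emph{Lower bound.} First I would unpack the definitions. By definition $\odot(K,L)=d^*_H(O(n)(\varrho(K)),O(n)(\varrho(L)))=\inf_{g\in O(n)}d_H(g\varrho(K),\varrho(L))$, so there is some $g\in O(n)$ nearly realizing the infimum, giving $g\varrho(K)\subseteq \varrho(L)+(\odot(K,L)+\epsilon)\mathbb B$ and symmetrically. On the other hand $d_{\Sim(n)}(K,L)=\inf\{\alpha\ge 1\mid K\subseteq \phi(L)\subseteq \alpha K+z\}$ with $\phi\in\Sim(n)$. The idea is that an optimal similarity $\phi=\alpha$-sandwiching $L$ between $K$ and $\alpha K+z$ produces, after normalizing both bodies into $\mathcal{B}^n$, a Hausdorff estimate: I would show that if $K\subseteq \phi(L)\subseteq \alpha K+z$ then the two normalized sets differ in Hausdorff distance by at least $\alpha-1$, because the inner radius-$1$ ball $\mathbb B$ (circumball) forces a displacement of width comparable to $\alpha-1$ on the boundary. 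Concretely, using that both normalized bodies sit in $\mathbb B$ with circumradius $1$, the containment $K\subseteq\phi(L)\subseteq\alpha K+z$ yields that one body must stick out of a $(\alpha-1)$-neighborhood of the other, so $\odot(K,L)\le d_H(\cdot,\cdot)\le \alpha-1=d_{\Sim(n)}(K,L)-1$, which rearranges to $\odot(K,L)+1\le d_{\Sim(n)}(K,L)$.

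\emph{Upper bound.} This is the harder direction and I expect it to be the main obstacle, since I must manufacture an explicit similarity achieving the bound from a near-optimal Hausdorff configuration. Starting from $g\in O(n)$ with $gK\subseteq L+\odot(K,L)\,\mathbb B$ (taking $K,L\in\mathcal{B}^n$, $R=1$), I would use that $L$ contains a ball of radius $r(L)$ centered at some point $c$, i.e.\ $c+r(L)\mathbb B\subseteq L$. The outer containment $gK\subseteq L+\odot\,\mathbb B$ and the inner ball let me inflate $L$ about $c$ by the factor $1+\odot/r(L)$ so that the dilated copy engulfs $gK$: since $L+\odot\,\mathbb B\subseteq c+(1+\odot/r(L))(L-c)$ follows from $\odot\,\mathbb B\subseteq (\odot/r(L))(L-c)$, which holds because $r(L)\mathbb B\subseteq L-c$. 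This produces the factor $1+\frac{R(L)}{r(L)}\odot(K,L)$ after restoring the general circumradius $R(L)$ (the normalization scaled lengths by $R(L)$, converting $\odot/r$ into $R\cdot\odot/r$). Running the symmetric argument with the roles of $K$ and $L$ exchanged gives the second factor $1+\frac{R(K)}{r(K)}\odot(K,L)$, and composing the two similarities (one sandwiching $L$ inside a dilated $K$, the other inside a dilated $L$) yields the product bound for $d_{\Sim(n)}$ via the multiplicative triangle inequality $d_{\Sim(n)}(K,L)\le d_{\Sim(n)}(K,M)\,d_{\Sim(n)}(M,L)$.

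The delicate points I would watch are the bookkeeping of the circumradius normalization (ensuring the ratios $R/r$ appear correctly once I leave $\mathcal{B}^n$), and verifying that the dilated containments genuinely realize the defining inequality for $d_{\Sim(n)}$ with the stated constant rather than merely an upper estimate of it. I would also confirm that the orthogonal map $g$ from the definition of $\odot$ is legitimately usable inside $\Sim(n)$, which it is since $O(n)\subset\Sim(n)$, so no extra scaling cost is incurred in aligning the two bodies.
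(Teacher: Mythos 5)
Your overall strategy coincides with the paper's for both halves, and your upper bound is essentially the paper's argument: absorb the term $\odot\,\mathbb B$ into a dilation of $L$ about its incenter using $\odot\,\mathbb B\subseteq\frac{\odot}{r(L)}(L-c)$, obtain the factor $1+\frac{R(L)}{r(L)}\odot$ after undoing the circumradius normalization, and chain the symmetric containment for $K$ to produce the sandwich $K\subseteq\sigma'L\subseteq(\text{product})\,K+z$ witnessing the bound on $d_{\Sim(n)}$. (You do not need an intermediate body and the multiplicative triangle inequality here; the direct composition of the two containments is the whole computation, exactly as in the paper.)

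The lower bound, however, has a genuine gap. Your stated mechanism is internally inconsistent: you claim the normalized sets differ in Hausdorff distance ``by at least $\alpha-1$'' and that ``one body must stick out of an $(\alpha-1)$-neighborhood of the other'' --- both of which would be \emph{lower} bounds on $d_H$ and hence useless --- and then conclude $d_H\le\alpha-1$ without justification. The real difficulty, which your sketch never confronts, is that $K$ and $\phi(L)$ have different circumradii, so passing to $\varrho(K)$ and $\varrho(\phi(L))$ rescales the two sides of the sandwich $K\subseteq\phi(L)\subseteq\alpha K+z$ by different factors. The paper's resolution is to set $\mu:=R(\phi(L))/R(K)$, note $1\le\mu\le\alpha$ from the sandwich, and use the identity $\lambda A=A+(\lambda-1)A\subseteq A+(\lambda-1)\mathbb B$ for convex $A\subseteq\mathbb B$ and $\lambda\ge1$: the inner containment gives $\varrho(K)\subseteq\mu\,\varrho(\phi(L))\subseteq\varrho(\phi(L))+(\mu-1)\mathbb B$ and the outer one gives $\varrho(\phi(L))\subseteq(\alpha/\mu)\varrho(K)\subseteq\varrho(K)+(\alpha/\mu-1)\mathbb B$ (up to translation), whence $\odot(K,L)\le\max\{\mu,\alpha/\mu\}-1\le\alpha-1$. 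Without this bookkeeping of the ratio $\mu$, your ``displacement of width comparable to $\alpha-1$'' does not yield the exact inequality $\odot(K,L)+1\le d_{\Sim(n)}(K,L)$.
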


\begin{proof}
Let us call $d_S=d_{\Sim(n)}(K,L)$, $x\in\R^n$ and $\sigma$ a similarity
such that
\[
K\subseteq \sigma L\subseteq x+d_S K.
\]
For the sake of clearness we assume $x=0$, and we will omit
all instances of translations (containments
hold up to translation). In particular, we will assume that $K/R(K)$ and $\sigma(L)/R(\sigma L)$ are both contained in $\mathbb B$. By definition, it holds that
\begin{equation*}
\begin{split}
\frac{K}{R(K)}&\subseteq
\frac{\sigma L}{R(\sigma L)}+\left(\frac{R(\sigma L)}{R(K)}-1\right)\frac{\sigma L}{R(\sigma L)}\\
&\subseteq\frac{\sigma L}{R(\sigma L)}+\left(\frac{R(\sigma L)}{R(K)}-1\right)\B
\end{split}
\end{equation*}
and analogously
\begin{equation*}
\begin{split}
\frac{\sigma L}{R(\sigma L)}&\subseteq
\frac{K}{R(K)}+\left(d_S\frac{R(K)}{R(\sigma L)}-1\right)\frac{K}{R(K)}\\
&\subseteq\frac{K}{R(K)}+\left(d_S\frac{R(K)}{R(\sigma L)}-1\right)\B.
\end{split}
\end{equation*}
Since $R(K)\leq R(\sigma L)\leq d_S R(K)$ it follows that
\[
\odot(K,L)\leq\max\left\{d_S\frac{R(K)}{R(\sigma L)},\frac{R(\sigma L)}{R(K)}\right\}-1\leq d_S-1.
\]

Conversely let us define $\odot=\odot(K,L)=d_H(K/R(K),(\sigma L)/R(L))$, for some
orthogonal transformation $\sigma$. It then holds that
\begin{equation*}
\frac{K}{R(K)}\subseteq\frac{\sigma L}{R(L)}+\odot\B
\subseteq\frac{1}{R(L)}\sigma L+\frac{\odot}{r(L)}\sigma L
\end{equation*}
as well as
\begin{equation*}
\frac{\sigma L}{R(L)}\subseteq\frac{K}{R(K)}+\odot\B
\subseteq\frac{1}{R(K)}K+\frac{\odot}{r(K)}K.
\end{equation*}
Denoting by
\[
\sigma'=R(K)\left(\frac{1}{R(L)}+\frac{\odot}{r(L)}\right)\sigma
\]
we then have that
\[
K\subset\sigma' L\subset R(K)R(L)\left(\frac{1}{R(L)}+\frac{\odot(K,L)}{r(L)}\right)
\left(\frac{1}{R(K)}+\frac{\odot(K,L)}{r(K)}\right)K
\]
from which the upper bound of $d_S$ follows.
\end{proof}

\textit{Acknowledgements.} We would like to thank Ren\'e Brandenberg
for many useful questions and discussions.

\end{document}